\documentclass{article}
\usepackage{amssymb,amsmath,amsthm,graphicx}
\usepackage{amsfonts}
\usepackage{pinlabel}
\usepackage[dvipsnames,svgnames,table]{xcolor}
\usepackage{xcolor}
\textheight 8.5in
\textwidth 6.5in
\oddsidemargin 0in
\topmargin 0in

\newtheorem{theorem}{Theorem}
\newtheorem{lemma}[theorem]{Lemma}
\newtheorem{prop}[theorem]{Proposition}
\newtheorem{corollary}[theorem]{Corollary}

\theoremstyle{definition}

\newtheorem{question}{Question}

\newtheorem{problem}{Problem}
\date{}

\title{Bridge Positions of Links That Cannot Be Monotonically Simplified}

\author{Puttipong Pongtanapaisan\footnote{Email: ppongtan@asu.edu} \;\ and Daniel Rodman\footnote{Email: daniel$\_$rodman@taylor.edu}}
\begin{document}
\maketitle

\begin{abstract} 
For any pair of integers \( m \) and \(n \) such that \( 3 <m< n \), we provide an infinite family of links, where each link in the family has a locally minimal \( n \)-bridge position and a globally minimal \( m \)-bridge position. We accomplish this by applying the criterion of Takao et al. The \( n \)-bridge position is interesting because the corresponding bridge sphere is unperturbed, so it must be perturbed at least once before it can be de-perturbed to attain a globally minimal \(m\)-bridge sphere.
\end{abstract}

\section{Introduction}

\subsection{Preliminaries}\label{subsec:prelim}
A tangle \((B^3,T)\) is \textit{trivial} if there is a Morse function \(h:(B^3,T)\rightarrow (-\infty,0]\) such that $h^{-1}(0)=\partial B^3$, $h|_{B^3}$ has exactly one index zero critical point and $h$ restricted to each arc component of \(T\) has exactly one index zero critical point. A \textit{bridge position} of a link \(L\) in the 3-sphere \(S^3\) is a decomposition of \((S^3,L)\) into a union of two trivial tangles which intersect in a sphere called a \textit{bridge sphere}. In the literature, bridge positions are sometimes also referred to as \textit{bridge splittings} or \textit{bridge presentations}.
Any given link has infinitely many distinct bridge positions, and it is useful to assign a measure of complexity to each one.
The usual way to do this is to count the number of components in the tangle on one side of the bridge sphere.
This is called the \textit{bridge number}.

\begin{figure}
\centering
\includegraphics[width=.5\textwidth]{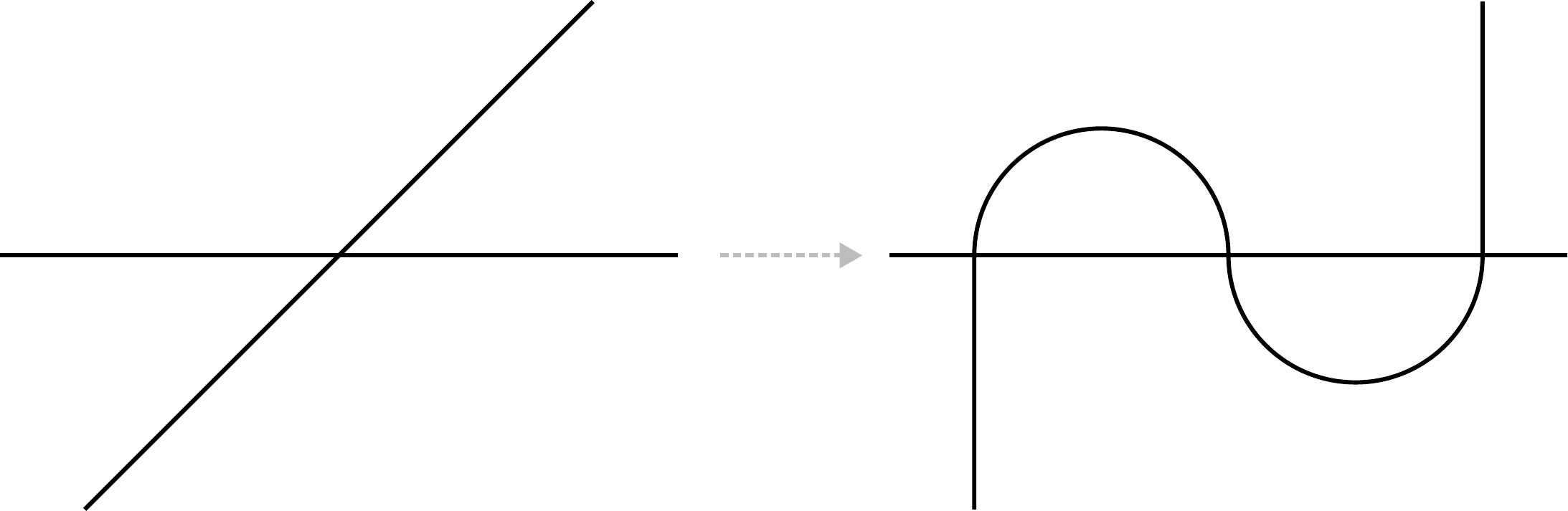}
\caption{Perturbation.}
\label{fig:241122_perturbation}
\end{figure}

Birman proved in \cite{birman1976stable} that any two bridge positions for a link are related by a sequence of moves called \textit{perturbations} (shown schematically in Figure \ref{fig:241122_perturbation}) and the inverse moves (de-perturbations). The horizontal line in Figure \ref{fig:241122_perturbation} represents a bridge sphere. Once a perturbation is performed, there exists a pair of disks on opposite sides of the bridge sphere intersecting each other in one point on the link, where the boundary of each disk can be decomposed into two parts: one part on the bridge sphere and one part on a subarc of the link.
Observe that a perturbation move increases the number of tangle components on either side of the bridge sphere by 1, resulting in a new bridge position with a higher bridge number than the original.

A sphere which does not admit a de-perturbation move is called \textit{unperturbed}, but it can be difficult to determine whether a given bridge sphere is unperturbed. 
In other words, if a link has an \(n\)-bridge position, it is not always clear whether it also has an \((n-1)\)-bridge position. 
The notion of strong irreducibility, which we will now define, is useful for this problem. 

For a surface $\Sigma$, the \textit{curve complex} $\mathcal{C}(\Sigma)$ is the graph whose vertices are isotopy classes of essential simple closed curves on $\Sigma$. Two vertices are connected by an edge in \(\mathcal{C}(\Sigma)\) if their representatives can be realized disjointly.
Given a link \(L\in S^3\) with bridge sphere \(\Sigma\), a disk in \(S^3\) is called \textit{essential} if 
1) its boundary is an element of \(\mathcal{C}(\Sigma)\) which cuts \(\Sigma\) into two punctured disks, and 
2) its interior is disjoint from both \(\Sigma\) and \(L\).
(Essential disks are also called \textit{compressing disks}.)
Let $\mathcal{D_+}$ (resp.\ $\mathcal{D}_-$) be the subcomplex of \(\mathcal{C}(\Sigma)\) spanned by curves on the bridge sphere that bound essential disks above (resp.\ below) the bridge sphere. 
The bridge sphere $\Sigma$ is \textit{strongly irreducible} if there is no edge connecting $\mathcal{D_+}$ to $\mathcal{D}_-$. A useful observation made in \cite{ozawa2013locally} is that if a bridge sphere is strongly irreducible, then the bridge sphere is unperturbed. We note that the converse does not necessarily hold, for there are unperturbed bridge spheres that are not strongly irreducible \cite{pongtanapaisan2020keen}.

\subsection{Moves Relating Representations}\label{subsec:moves}
In general there are numerous methods to represent a fixed link type. Some well-known examples include braid presentations and arc presentations as well as the bridge positions we discussed above \cite{cromwell2004knots}. In each of these methods for representing links, there are elementary moves that preserve both the presentation type and the topological type. 
As we saw above with bridge positions and perturbation moves, these elementary moves may alter the complexities of the presentations. 
An interesting problem is to measure how many moves one has to perform to relate two presentations of the same link type. The following question naturally arises.

\begin{question}\label{q:monotonic}
  Given a measure of complexity (e.g., braid index, arc index, bridge number, etc.), is there a sequence of complexity-decreasing moves from every diagram or embedding of a link to one of minimal complexity?
\end{question}

We refer to a presentation with minimal complexity as a \textit{global minimum}. A \textit{local minimum} is a presentation that does not allow for a complexity-decreasing move but is not necessarily a global minimum. For braid presentations and their corresponding elementary moves, there exists a presentation of the unknot that is a local minimum but not a global minimum \cite{morton1983irreducible}. On the other hand, there is no locally minimal bridge position for the unknot other than the unique global minimum \cite{otal1982presentations}. Moreover, regarding bridge positions, the answer to Question~\ref{q:monotonic} is affirmative for 2-bridge knots \cite{otal1985presentations}, torus knots \cite{ozawa2011nonminimal}, and the $(p, q)$-cable of an $mp$-small knot $K$, where every non-minimal bridge position of $K$ is perturbed \cite{zupan2011properties}.

There is an interesting topological consequence if the answer to Question~\ref{q:monotonic} is \textit{no} for a given bridge position: There will be at least two nonisotopic bridge spheres with the same Euler characteristic for the link. A result of Maggy Tomova then implies that the Euler characteristic of one bridge sphere can be used to estimate the \textit{bridge distance} of the other \cite{tomova2007multiple}. The bridge distance is a measure of complexity that can be difficult to compute, but it provides information about hyperbolicity of link complements \cite{bachman2005distance}, the minimum number of special generators for knot groups \cite{blair2024adding}, and distortion of knots \cite{blair2020distortion}.

Numerous results provide families of bridge positions for which the answer to Question~\ref{q:monotonic} is \textit{no}. For instance, Birman and Montesinos use algebraic techniques and branched coverings to provide 3-bridge links with two distinct global minima \cite{birman1976stable,montesinos1976minimal}. Jang later gave examples of 3-bridge links with four distinct global minima \cite{jang2013classification}. The first example of a locally minimal, but not globally minimal bridge position was constructed in \cite{ozawa2013locally}. In that paper, the author gave one hyperbolic 3-bridge knot as the example and posed the following question:

\begin{problem}[Problem 3.2 of \cite{ozawa2013locally}]\label{problem:main}
    For an integer $n > 3$, can we generate infinitely many $n$-bridge
positions which are locally minimal, but not globally minimal?
\end{problem}

For any two distinct arbitrarily large integers \(n\) and \(m\), we answer Problem \ref{problem:main} affirmatively by constructing an infinite family of links \(L_{m,n}\) that have both a locally minimal \(n\)-bridge position and a locally minimal \(m\)-bridge position.

\begin{theorem}\label{thm:main}
For any two integers \(m\) and \(n\) such that \(3<m<n\), there exists an infinite family of links, each of which has both an unperturbed \(n\)-bridge sphere and an unperturbed \(m\)-bridge sphere.
\end{theorem}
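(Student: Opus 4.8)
The plan is to write down, for each pair $(m,n)$ with $3<m<n$, an explicit infinite list of links $L_{m,n}^{(k)}$, each carrying two visibly different bridge decompositions: one with $m$ bridges and one with $n$ bridges. The heart of the argument is to show that \emph{both} of these bridge spheres are \textbf{strongly irreducible}; by the observation from \cite{ozawa2013locally} recalled in Section~\ref{subsec:prelim}, strong irreducibility implies unperturbedness, which is exactly the conclusion of Theorem~\ref{thm:main}. (As a bonus, since an $m$-bridge position exists and $m<n$, the $n$-bridge position cannot be globally minimal; a separate bridge-number lower bound would upgrade this to ``the $m$-bridge position is globally minimal,'' matching the abstract, but that estimate is not needed for the theorem as stated.) Note also that once both spheres are known to be unperturbed, Birman's theorem \cite{birman1976stable} immediately yields the phenomenon advertised in the abstract: to pass from the $n$-bridge sphere to the $m$-bridge sphere one must first perturb.

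For the construction I would fix a ``building block'' --- a tangle or link admitting a strongly irreducible bridge surface --- and equip it with an integer parameter $k$, realized most simply as a $k$-fold twist region, or as $k$ iterations of a fixed tangle. The parameter plays two roles: it pushes the relevant distance estimate past the threshold demanded by the criterion of Takao et al., and it forces the links $L_{m,n}^{(k)}$ to be pairwise non-isotopic for large $k$ (distinguishable, say, by a polynomial invariant, by hyperbolic volume, or by the complexity of an essential surface), so that the family is genuinely infinite. The integers $m$ and $n$ would be dialed in by grafting on, along an unknotted sphere/disk system, enough trivial or rational summands to bring the two bridge numbers up to exactly $m$ and to exactly $n$ respectively; keeping the grafting region simple is what prevents compressing disks from wandering into it and spoiling either the distance estimate or the bridge numbers.

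The technical core is the verification of the hypotheses of the criterion of Takao et al.\ for each of the two bridge spheres. Concretely I would: (i) normalize each trivial tangle against a reference disk/sphere system inherited from the building block; (ii) carry out an innermost-disk and edge-slide argument showing that every essential disk above the bridge sphere, and every essential disk below it, is forced to meet the twist region in a prescribed way (this is where the precise statement of the criterion enters); and (iii) conclude that when $k$ is large enough no disk above can be isotoped off a disk below, i.e., no edge of $\mathcal{C}(\Sigma)$ joins $\mathcal{D}_+$ to $\mathcal{D}_-$. Running this argument for both the $m$-bridge and the $n$-bridge sphere produces strong irreducibility in each case; for the additional claim $b(L_{m,n}^{(k)})=m$ one would use Schubert-type additivity across the grafting region, or argue directly from the building block.

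I expect the main obstacle to be executing step (ii) for \emph{both} spheres at once while the two decompositions still present the \emph{same} link: the construction must be rigid enough that two rather different tangle decompositions each satisfy the criterion's hypotheses, yet supple enough that they reglue to a common link, and stable enough that varying $k$ changes neither of the two bridge numbers. Accordingly, I anticipate that the bulk of the work will be a meticulous, case-by-case normal-form analysis of compressing disks --- showing they must interact with the twist region exactly as the criterion requires --- together with the bookkeeping that simultaneously pins the two bridge numbers at $m$ and $n$ and certifies that the links $L_{m,n}^{(k)}$ are pairwise distinct.
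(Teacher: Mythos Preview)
Your outline diverges from the paper in two substantive ways, and one of them is a genuine obstacle.

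First, the ``criterion of Takao et al.'' (from \cite{jang2016knot}) is not a distance or twist-region estimate of the kind you sketch in step~(ii). It is the \emph{2-connected condition}: a purely combinatorial test on a bridge diagram $((\sigma^+,\sigma^-),\ell)$, asking that certain auxiliary graphs $\mathcal{G}_{i,j,\varepsilon}$, built from how the shadows of the upper bridge disks cross a fixed loop $\ell$, are all 2-connected. The paper verifies this for the $n$-bridge sphere $H$ by an explicit isotopy of level spheres, tracking what it calls ``$n$-bundles'' of arcs down from height $2m-1$ to height $0$ (Lemma~\ref{lem:inductive} and Proposition~\ref{prop:vertically-unperturbed}). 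There is no innermost-disk or normal-form analysis of compressing disks, and the twist regions are not used to push any distance past a threshold; they are there to create extra link components and to make the family infinite.

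Second, and more serious for your plan: the paper does \emph{not} prove that the $m$-bridge sphere $V$ is strongly irreducible, and your proposal to do so is exactly where things would break. The paper instead arranges $L_{m,n}$ to have $m-1$ components, of which $m-2$ are unknots in $1$-bridge position relative to $V$ and the remaining one is a genuine $2$-bridge knot ($4_1$ or $5_2$); since $V$ is minimal for each component separately, $V$ is unperturbed for the link (Proposition~\ref{prop:horizontally-unperturbed}). Your ``grafting on trivial or rational summands'' to hit the target bridge number $m$ is precisely the move that tends to produce obvious disjoint compressing disks on the two sides, making the sphere weakly reducible and killing step~(iii). So the asymmetry in the paper is not an accident: strong irreducibility is established for one sphere via the 2-connected condition, while the other sphere is handled by a component-by-component bridge-number count that needs only unperturbedness, not strong irreducibility. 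Your proposal, as written, commits to the harder route on both sides without a concrete construction, and the grafting idea you float is incompatible with that route.
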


Our proof will be constructive.
Each link \(L_{m,n}\) in our construction will have exactly \((m-1)\) components, and we remark that our examples are new since they have multiple components. Also, $m=3$ for the examples provided in \cite{jang2016knot}, but $m$ can be arbitrary in this paper. 
There are some quick corollaries we can draw. The main result of \cite{lee2023nonminimal} and \cite{zupan2011properties} implies the following.

\begin{corollary} 
The link $L_{m,n}$ is a prime link that is not an iterated torus knot or a $(2,2q)$-cable of an iterated torus knot.
\end{corollary}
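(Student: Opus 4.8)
The plan is to deduce the corollary from the single feature of $L_{m,n}$ produced by Theorem~\ref{thm:main}: the link carries a bridge sphere $\Sigma_n$ of bridge number $n$ that is unperturbed, even though its bridge number equals $m<n$. Thus $L_{m,n}$ admits a \emph{non-minimal unperturbed} bridge sphere. The point is that this is impossible for any link for which Question~\ref{q:monotonic} has an affirmative answer; and for each family named in the corollary the cited works prove the stronger statement that \emph{every} non-minimal bridge position is perturbed. Hence the bare existence of $\Sigma_n$ will exclude $L_{m,n}$ from all of those families, which is exactly the corollary.

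I would then assemble the inputs. By \cite{ozawa2011nonminimal} every non-minimal bridge position of a torus knot is stabilized, and by \cite{otal1982presentations} this holds vacuously for the unknot; since an iterated torus knot is obtained from such a knot by successive cablings, and the smallness hypothesis needed by \cite{zupan2011properties} persists at each stage, the cabling theorem of \cite{zupan2011properties} propagates this property up the tower. So every iterated torus knot has all of its non-minimal bridge positions stabilized and therefore cannot carry $\Sigma_n$; this shows $L_{m,n}$ is not an iterated torus knot. The $(2,2q)$-cabling operation yields a two-component link and so falls outside the coprime-cable hypothesis of \cite{zupan2011properties}; this is precisely the gap filled by the main result of \cite{lee2023nonminimal}, which shows that a $(2,2q)$-cable of such a knot again has every non-minimal bridge position stabilized, so $L_{m,n}$ is not such a cable either. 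As a consistency check, $L_{m,n}$ has $m-1\ge 3$ components and so is a priori neither a knot nor a two-component link; the bridge-theoretic argument is the uniform way to see this and is what also delivers primeness.

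For primeness I would again argue by contradiction: if $L_{m,n}=L'\#L''$, then the composite case covered by the circle of results invoked above forces every non-minimal bridge position of $L_{m,n}$ to be stabilized, contradicting $\Sigma_n$; alternatively, primeness is visible directly from the explicit construction of $L_{m,n}$ carried out for Theorem~\ref{thm:main}. The one place I expect to need care — the main, if modest, obstacle — is reconciling conventions: I must confirm that ``monotonic simplification'' in \cite{zupan2011properties} and \cite{lee2023nonminimal} is taken with respect to exactly the complexity (bridge number) and elementary move (perturbation and its inverse) used here, so that ``the bridge position is stabilized'' there is literally the negation of ``unperturbed'' here, and that the smallness and slope hypotheses genuinely hold along the iterated-torus-knot tower and for the $(2,2q)$-cable. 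Once these identifications are in place, each clause of the corollary follows at once from the existence of $\Sigma_n$.
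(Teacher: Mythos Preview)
Your argument for the ``not an iterated torus knot'' and ``not a $(2,2q)$-cable'' clauses is the one the paper has in mind: \cite{zupan2011properties} and \cite{lee2023nonminimal} show that every non-minimal bridge position in those families is perturbed, so the unperturbed $n$-bridge sphere $H$ (with $n$ strictly exceeding the bridge number) excludes $L_{m,n}$ from both. As you yourself observe, the component count $m-1\ge 3$ already gives these two clauses for free, so the substantive content is primeness.

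There your argument has a genuine gap. Your contention that compositeness, via ``the circle of results invoked above,'' would force every non-minimal bridge position of $L_{m,n}$ to be stabilized is not supported by those references: \cite{zupan2011properties} and \cite{lee2023nonminimal} concern cables over specific small companions, not arbitrary connected sums $L'\#L''$, and there is no blanket theorem that a composite link has only perturbed non-minimal bridge positions. Indeed, if some summand $L'$ already admitted a locally-but-not-globally minimal bridge sphere, one would expect $L'\#L''$ to inherit one, so the bare existence of $\Sigma_n$ cannot by itself rule out compositeness. Your fallback, ``visible directly from the explicit construction,'' is not an argument as written. What you are missing is that Proposition~\ref{prop:vertically-unperturbed} establishes more than unperturbedness: it shows $H$ is \emph{strongly irreducible}. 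That is what delivers primeness---a decomposing sphere for a putative connected sum, placed in minimal position with respect to $H$, produces an essential curve on $H$ bounding compressing disks on both sides, contradicting strong irreducibility (equivalently, the distance bounds in \cite{bachman2005distance} or \cite{tomova2007multiple} force bridge spheres of composite links to have small distance). You should invoke strong irreducibility of $H$, not the cabling results, for this clause.
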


We also have a lower bound on the number of moves it takes to go between the two locally minimal bridge positions given by Theorem \ref{thm:main}.

\begin{corollary}
    It takes at least $n-m+2$ perturbation/de-perturbation moves to obtain one bridge sphere in Theorem \ref{thm:main} from the other one.
\end{corollary}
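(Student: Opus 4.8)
The plan is to argue purely combinatorially about how the bridge number changes along a sequence of moves. By Birman's theorem \cite{birman1976stable}, the unperturbed $n$-bridge sphere $\Sigma_n$ and the unperturbed $m$-bridge sphere $\Sigma_m$ of $L_{m,n}$ produced in Theorem~\ref{thm:main} are connected by some finite sequence of perturbations and de-perturbations; I would fix a shortest such sequence and record the bridge numbers it passes through as a walk on the integers beginning at $n$ and ending at $m$. As noted in Section~\ref{subsec:prelim}, a perturbation raises the bridge number by exactly $1$ and a de-perturbation lowers it by exactly $1$, so this walk takes unit steps up (perturbations) and down (de-perturbations).

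The two key constraints come from the hypothesis that both endpoints are unperturbed. First, because $\Sigma_n$ admits no de-perturbation, the initial step of the sequence cannot be a de-perturbation, so the walk's first step is $+1$. Second, if the final step were a perturbation, then $\Sigma_m$ would have been produced by perturbing some bridge sphere and hence would admit a de-perturbation, namely the inverse of that perturbation, contradicting that $\Sigma_m$ is unperturbed; hence the walk's last step is $-1$.

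A counting argument then finishes things. Let $p$ be the number of perturbations and $d$ the number of de-perturbations in the sequence. The net change in bridge number gives $p - d = m - n$, so $d = p + (n-m)$ and the total number of moves is $p + d = 2p + (n-m)$. Since the first step is a perturbation we have $p \geq 1$ (and the last step being a de-perturbation forces $d \geq 1$, which is automatic since $n > m$), so the total number of moves is at least $2 + (n - m) = n - m + 2$. The same computation applies verbatim to a sequence running from $\Sigma_m$ to $\Sigma_n$, so in either direction at least $n - m + 2$ moves are required.

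I do not expect a genuine obstacle here: given Birman's theorem and the bookkeeping of how perturbations affect the bridge number, the argument is elementary. The only points that need care are spelling out precisely why the last move must be a de-perturbation (i.e.\ that any bridge sphere obtained by a perturbation is, by definition, perturbed) and phrasing the counting step so that it does not tacitly assume the sequence already has length at least two before the endpoint constraints are invoked.
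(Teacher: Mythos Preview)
Your argument is correct. The paper does not actually supply a proof of this corollary: it is stated in the introduction as one of the ``quick corollaries'' of Theorem~\ref{thm:main}, with the justification left to the reader. Your counting argument---tracking the bridge number as a unit-step walk, using that $\Sigma_n$ being unperturbed forces the first move to be a perturbation and that $\Sigma_m$ being unperturbed forbids arriving via a perturbation---is precisely the elementary reasoning the authors have in mind, and nothing further is needed.
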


\section{The links of interest}\label{sec:links}

\begin{figure}
\labellist
\small\hair 2pt

\pinlabel {-1} [Br] at 10 30
\pinlabel {0} [Br] at 10 58.375
\pinlabel {1} [Br] at 10 86.75
\pinlabel {2} [Br] at 10 115.125
\pinlabel {3} [Br] at 10 143.5
\pinlabel {4} [Br] at 10 171.875
\pinlabel {5} [Br] at 10 200.25
\pinlabel {6} [Br] at 10 228.625
\pinlabel {7} [Br] at 10 257
\pinlabel {\(2m-7\)} [Br] at 10 340.4
\pinlabel {\(2m-6\)} [Br] at 10 368.7
\pinlabel {\(2m-5\)} [Br] at 10 397
\pinlabel {\(2m-4\)} [Br] at 10 425.3
\pinlabel {\(2m-3\)} [Br] at 10 453.6
\pinlabel {\(2m-2\)} [Br] at 10 481.9
\pinlabel {\(2m-1\)} [Br] at 10 510.1
\pinlabel {\(y\)} [Br] at 10 554.1

\pinlabel {1} [t] at 47.5 10
\pinlabel {2} [t] at 75.86 10
\pinlabel {3} [t] at 104.21 10
\pinlabel {4} [t] at 132.57 10
\pinlabel {5} [t] at 160.93 10
\pinlabel {6} [t] at 189.29 10
\pinlabel {7} [t] at 217.64 10
\pinlabel {8} [t] at 246 10
\pinlabel {\(2n\)} [t] at 615 10
\pinlabel {\(x\)} at 665 35

\endlabellist
\centering
\includegraphics[width=.6\textwidth]{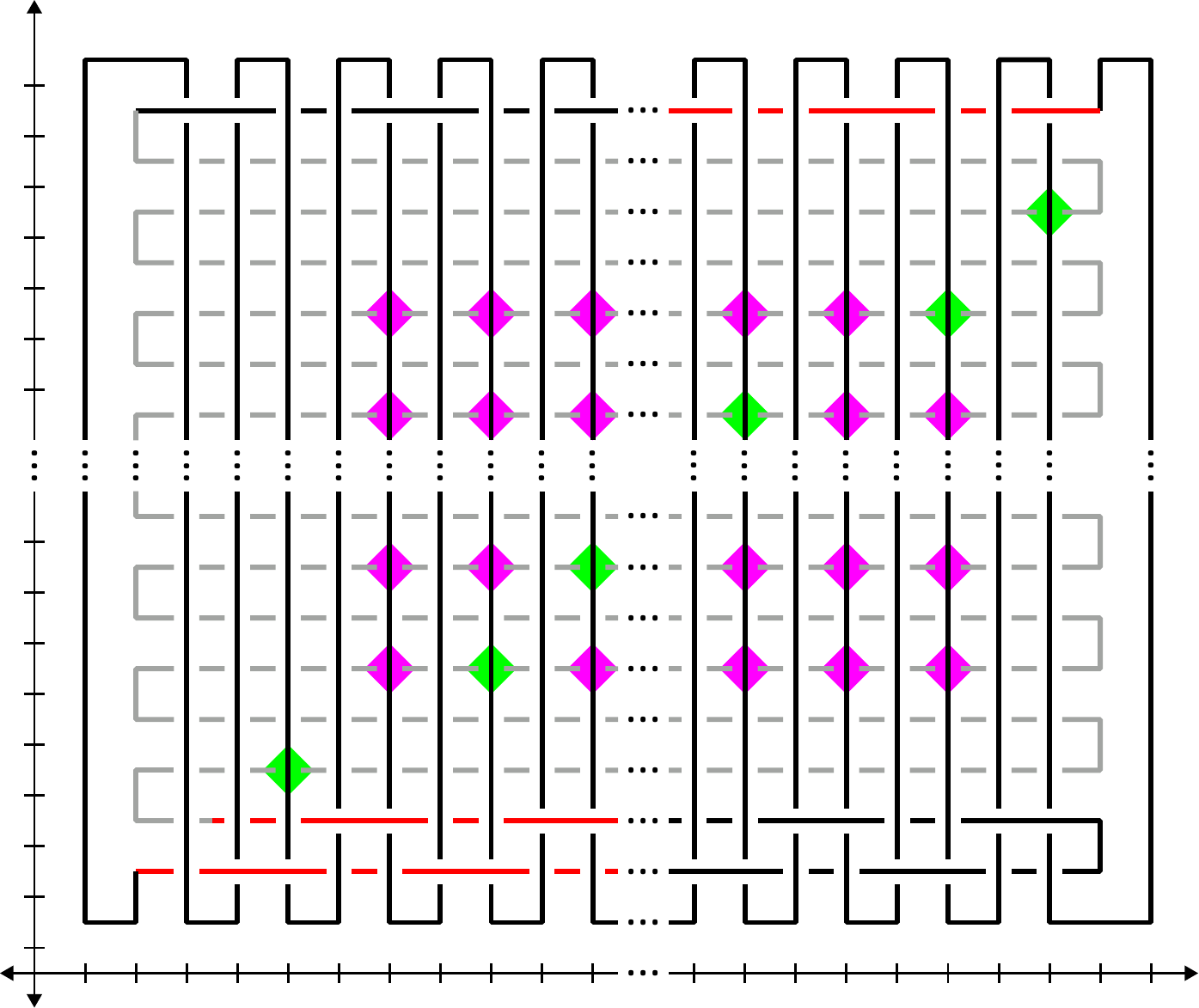}
\caption{The knot diagram \(D_{m,n}\) when \(n\) is even.
When \(n\) is odd, the crossing information is switched at every crossing involving a red colored strand.
An example collection of distinguished crossings is highlighted in green.}
\label{fig:241115_starting-diagram}
\end{figure}

\begin{figure}
\centering
\includegraphics[width=.3\textwidth]{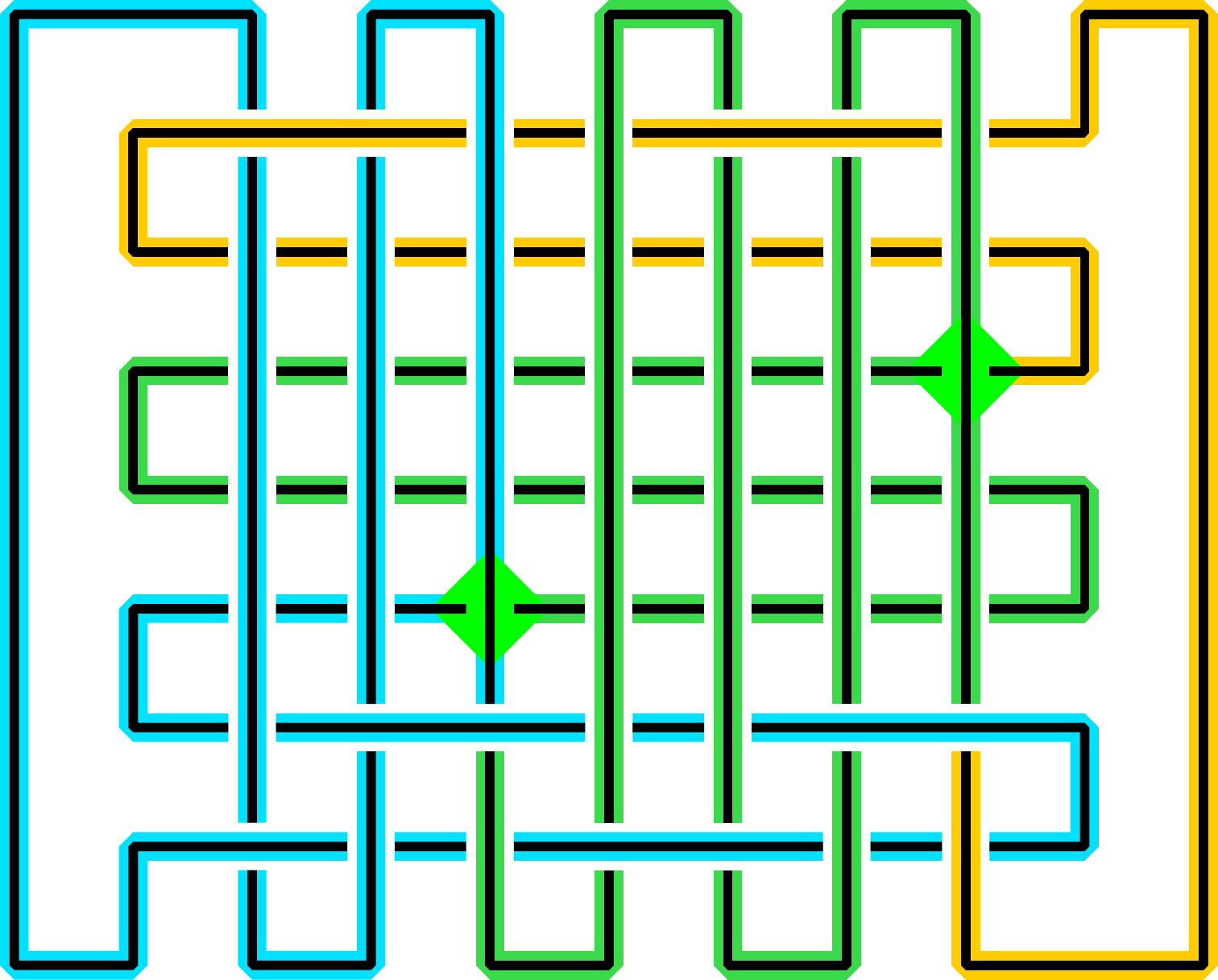}
\caption{The smallest diagram \(D_{4,5}\).
The two distinguished crossings are highlighted in green, and subarcs of the diagrams are highlighted different colors, illustrating the link components they will belong to after surgery.}
\label{fig:241126_low-value-links}
\end{figure}

\begin{figure}
\labellist
\small\hair 2pt

\pinlabel {-1} [Br] at 10 30
\pinlabel {0} [Br] at 10 58.375
\pinlabel {1} [Br] at 10 86.75
\pinlabel {2} [Br] at 10 115.125
\pinlabel {3} [Br] at 10 143.5
\pinlabel {4} [Br] at 10 171.875
\pinlabel {5} [Br] at 10 200.25
\pinlabel {6} [Br] at 10 228.625
\pinlabel {7} [Br] at 10 257
\pinlabel {\(2m-8\)} [Br] at 10 298
\pinlabel {\(2m-7\)} [Br] at 10 326.29
\pinlabel {\(2m-6\)} [Br] at 10 354.57
\pinlabel {\(2m-5\)} [Br] at 10 382.86
\pinlabel {\(2m-4\)} [Br] at 10 411.14
\pinlabel {\(2m-3\)} [Br] at 10 439.43
\pinlabel {\(2m-2\)} [Br] at 10 467.71
\pinlabel {\(2m-1\)} [Br] at 10 496
\pinlabel {\(y\)} [Br] at 10 540

\pinlabel {1} [t] at 47.5 10
\pinlabel {2} [t] at 75.86 10
\pinlabel {3} [t] at 104.21 10
\pinlabel {4} [t] at 132.57 10
\pinlabel {5} [t] at 160.93 10
\pinlabel {6} [t] at 189.29 10
\pinlabel {7} [t] at 217.64 10
\pinlabel {8} [t] at 246 10
\pinlabel {\(2n-2m+7\)} [t] at 380 5
\pinlabel {\(2n\)} [t] at 680 10
\pinlabel {\(x\)} [t] at 750 27

\pinlabel {\(\bar{E}_1^+\)} at 128 533
\pinlabel {\(\bar{E}_2^+\)} at 184 533
\pinlabel {\(\bar{E}_3^+\)} at 240 533
\pinlabel {\(\cdots\)} at 276 533
\pinlabel {\(\bar{E}_{n-1}^+\)} at 607 533
\pinlabel {\(\bar{E}_{n}^+\)} at 675 533

\pinlabel {\(E_1^-\)} at 96 33
\pinlabel {\(E_2^-\)} at 152 33
\pinlabel {\(E_3^-\)} at 208 33
\pinlabel {\(\cdots\)} at 240 33
\pinlabel {\(E_{n-1}^+\)} at 580 33
\pinlabel {\(E_{n}^+\)} at 630 33

\pinlabel {\(H\)} at 733 62
\pinlabel {\(V\)} at 90 550

\endlabellist
\centering
\includegraphics[width=.8\textwidth]{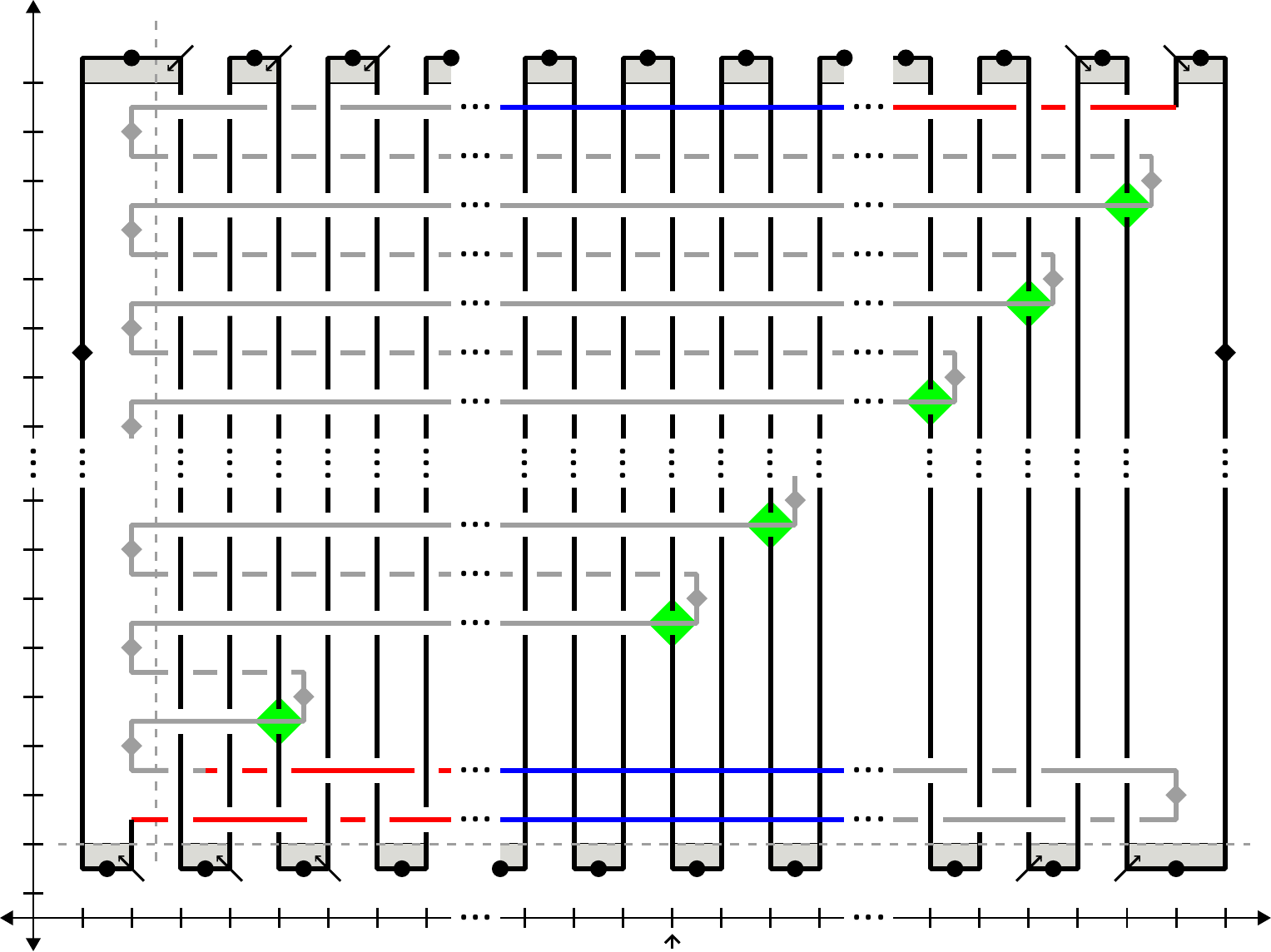}
\caption{The knot \(K_{m,n}\) with \(n\) bridges when \(n\) is even.
In the case when \(n\) is odd, the diagram looks the same except the crossing information is switched at every crossing involving a red arc.
Along the blue arcs, the crossing information is not shown, but it follows the over-over-under-under pattern of the strands to the left and right.
The distinguished crossings are highlighted in green.}
\label{fig:241007_Knot}
\end{figure}

The delightful ideas and knot construction in \cite{jang2016knot} (especially sections 4 and 5) are the primary inspiration and jumping-off point for us in this paper, and we assume readers will read our work after or at least with reference to that one.
To aid readers therefore, much of our notation will be the same as or similar to the notation in \cite{jang2016knot}.

We begin by constructing a knot diagram \(D_{m,n}\).
Then we will switch several crossings of \(D_{m,n}\) and embed it into \(S^3\), naming the resulting knot \(K_{m,n}\) (which is inspired by the knot \(K_n\) from \cite{jang2016knot}).
We will then modify \(K_{m,n}\) by replacing certain crossings with twist regions, thereby obtaining a link which we call \(L_{m,n}\).
It is this link which we will show satisfies Theorem \ref{thm:main}.
(Technically \(L_{m,n}\) represents an infinite family of links.)
A fundamental insight from \cite{jang2016knot} is that by choosing particular crossing information along the top two horizontal strands of their knot \(K_n\) and along its two bottom horizontal strands, we guarantee that the knot satisfies the 2-connected condition (which we will define in Section \ref{subsec:two-conned}).
Thus we will employ the same crossing patterns in the construction of \(L_{m,n}\).

Fix positive integers \(m\) and \(n\) such that \(3<m<n\).
For \(n\) even, \(D_{m,n}\) is the knot diagram in Figure \ref{fig:241115_starting-diagram}.
When \(n\) is odd, \(D_{m,n}\) is the same except that the crossing information is switched at every crossing involving a red colored strand.
Observe that the gray colored strands always cross under other strands.
The diagram \(D_{n,m}\) is pictured in the \(xy\)-plane, which allows us to easily talk about the several crossings which are highlighted (green and pink) in the figure.
Those crossings occur at the points \({(5,2.5)}\) and \({(2n-2, 2m-3.5)}\) and at every point of the form \({(i,j)}\), where \(i\in\{7,9,11,\hdots,2n-3\}\) and \(j\in\{4.5, 6.5, 8.5, 2m-5.5\}\).

Observe that there are \(m-2\) rows of highlighted crossings.
(The top and bottom rows each only contain a single highlighted crossing.)
Create a collection of \textit{distinguished crossings} by selecting one crossing from each row in such a way that for any pair of distinguished crossings, one of them is both higher and to the right of the other.
(In our figures, distinguished crossings are highlighted in green.
For ease of illustration, Figures \ref{fig:241007_Knot}, \ref{fig:241028_9-6-example}, and \ref{fig: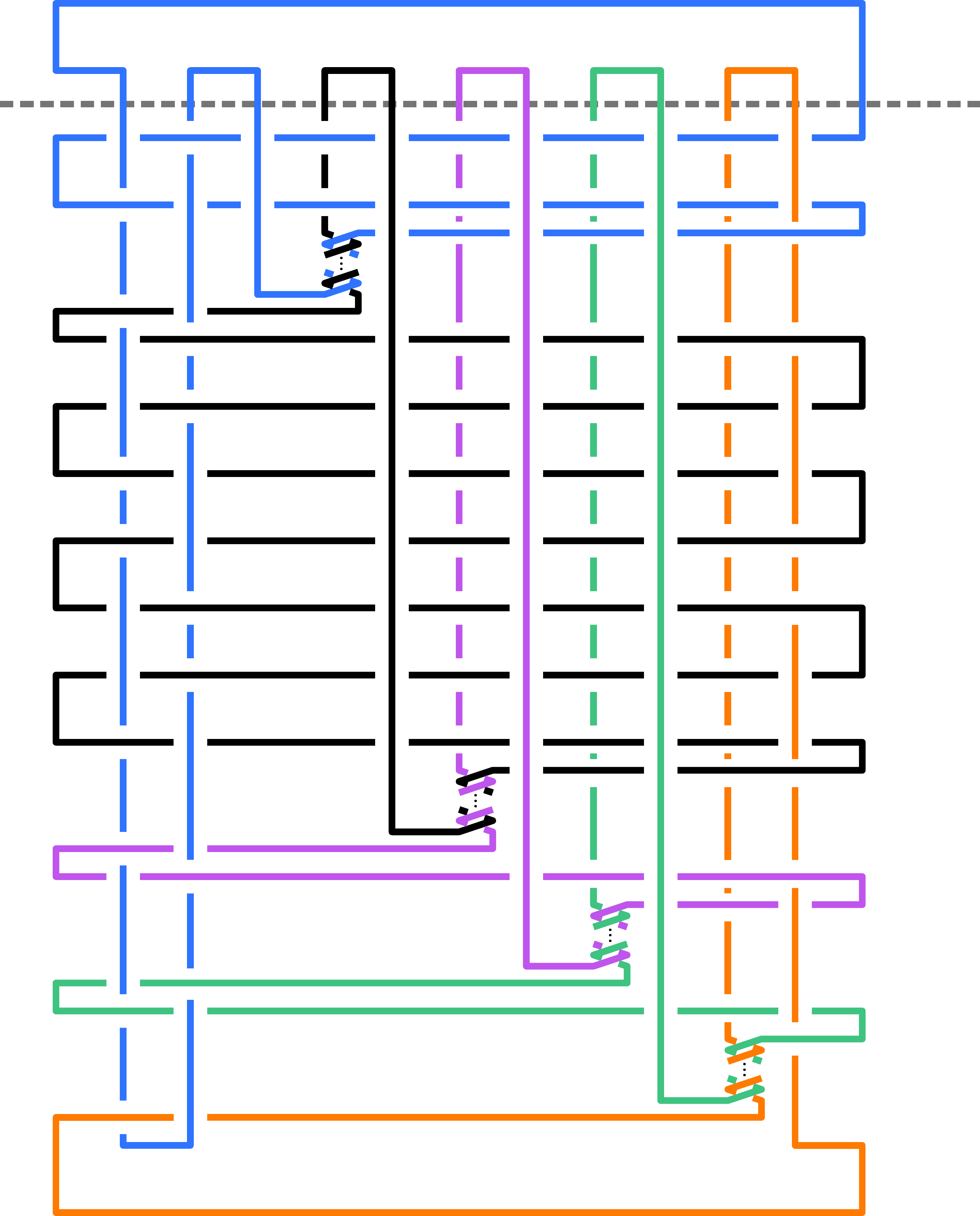} show a particular selection of distinguished crossings located as far to the right as possible.)
The smallest values we allow for \(m\) and \(n\) are \(m=4\) and \(n=5\), and an image of \(D_{4,5}\) is provided in Figure \ref{fig:241126_low-value-links}, along with its two distinguished crossings.

Next we modify \(D_{m,n}\) by switching the crossing information at each distinguished crossing and at every crossing along the gray strand to the left of a distinguished crossing.
Then we perform a series of Reidemeister II moves to remove unnecessary crossings involving the gray and black strands, and we embed this modified diagram in \(S^3=R^3\cup\{\infty\}\) as shown in Figure \ref{fig:241007_Knot} and call the resulting knot \(K_{m,n}\).
We stipulate that 1) the black colored arcs of \(K_{m,n}\) are located in the \(xy\)-plane, 2) \(K_{m,n}\subset{[0,2n+2]}\times{[-1,2m]}\times{[-1,1]}\), and 3) the gray colored arc is in minimal position with respect to the \(xy\)-plane.

The \(y\)-axis induces a natural height function \(h_y\) on \(K_{m,n}\) with respect to which the horizontal dashed line represents the level sphere \(h_y^{-1}(0)\), which we name \(H\).
Similarly, the \(x\)-axis induces a height function \(h_x\) on the knot, and the vertical dashed line represents a level sphere \(h_x^{-1}(2.5)\), which we call \(V\).
After a slight perturbation of the knot, both \(H\) and \(V\) are simultaneously bridge spheres with respect to \(h_y\) and \(h_x\), respectively, with the circular points in the figure representing the knot's extrema with respect to \(H\) and with the diamond points representing the extrema with respect to \(V\).
To distinguish between all these extrema, we will call the circular points \textit{\(H\)-maxima} and \textit{\(H\)-minima}, and we will call the diamond points \textit{\(V\)-maxima} and \textit{\(V\)-minima}. 
We have constructed \(K_{m,n}\) so that \(n\) (resp., \(m\)) is the number of \(H\)-maxima (resp., \(V\)-maxima).
Intuitively speaking, the knot \(K_{m,n}\) is similar to \(K_n\) from \cite{jang2016knot}, except that \(K_{m,n}\) includes a number of extra horizontal loops of various lengths which are not present in \(K_n\).

\begin{figure}
\labellist
\small\hair 2pt
\pinlabel{Surgery} at 220 113
\pinlabel{Isotopy} at 520 113
\endlabellist
\centering
\includegraphics[width=.4\textwidth]{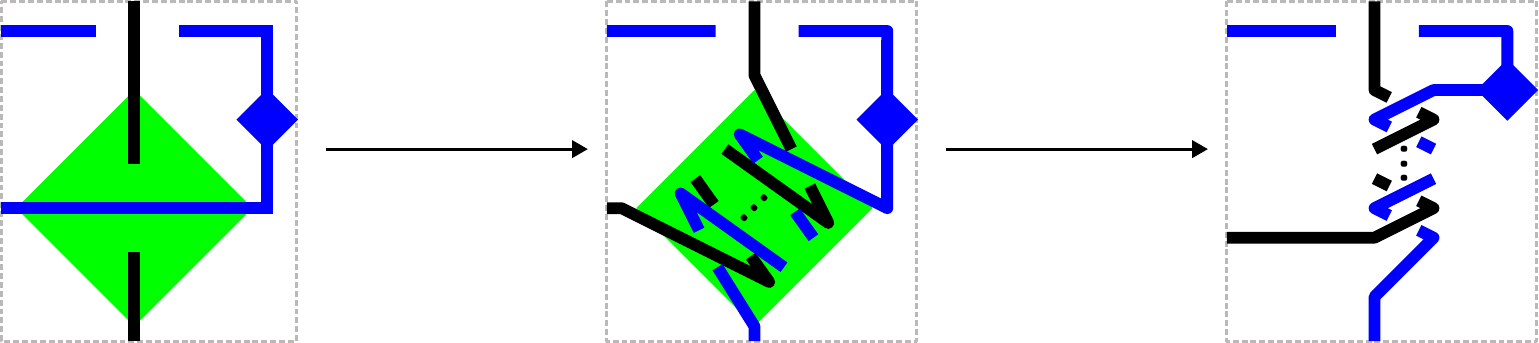}
\caption{At each distinguished crossing we perform a surgery, replacing the crossing with a twist region as pictured here.
This changes the knot \(K_{m,n}\) into the link \(L_{m,n}\).
We perform the isotopy on the right, making \(H\) an \(n\)-bridge sphere for \(L_{m,n}\).
On the right side of Figure \ref{fig:241028_9-6-example}, we more explicitly show how the twist region and the \(V\)-extrema are embedded with respect to the height function \(h_y\).}
\label{fig:241028_rotating-twists}
\end{figure}

\begin{figure}
\labellist
\small\hair 2pt
\pinlabel {-1} [Br] at 50 97
\pinlabel {0} [Br] at 50 182.5
\pinlabel {1} [Br] at 50 268
\pinlabel {2} [Br] at 50 353.5
\pinlabel {3} [Br] at 50 439
\pinlabel {4} [Br] at 50 524.5
\pinlabel {5} [Br] at 50 610
\pinlabel {6} [Br] at 50 695.5
\pinlabel {7} [Br] at 50 781
\pinlabel {8} [Br] at 50 866.5
\pinlabel {9} [Br] at 50 952
\pinlabel {10} [Br] at 50 1037.5
\pinlabel {11} [Br] at 50 1123
\pinlabel {\(y\)} [Br] at 50 1208.5

\pinlabel {\(y\)} [l] at 2725 1072
\pinlabel {\(s_0+1\)} [l] at 2725 935
\pinlabel {\(s_0+0.5\)} [l] at 2725 793.8
\pinlabel {\(s_0\)} [l] at 2725 652.5
\pinlabel {\(s_0-0.5\)} [l] at 2725 511.3
\pinlabel {\(s_0-1\)} [l] at 2725 370

\pinlabel {1} [t] at 152 40
\pinlabel {2} [t] at 237.3 40
\pinlabel {3} [t] at 322.7 40
\pinlabel {4} [t] at 408 40
\pinlabel {5} [t] at 493.3 40
\pinlabel {6} [t] at 578.7 40
\pinlabel {7} [t] at 664 40
\pinlabel {8} [t] at 749.3 40
\pinlabel {9} [t] at 834.7 40
\pinlabel {10} [t] at 920 40
\pinlabel {11} [t] at 1005.3 40
\pinlabel {12} [t] at 1090.7 40
\pinlabel {13} [t] at 1176 40
\pinlabel {14} [t] at 1261.3 40
\pinlabel {15} [t] at 1346.7 40
\pinlabel {16} [t] at 1432 40
\pinlabel {17} [t] at 1517.3 40
\pinlabel {18} [t] at 1602.7 40
\pinlabel {19} [t] at 1688 40
\pinlabel {\(x\)} [t] at 1750 40

\endlabellist
\includegraphics[width=.92\textwidth]{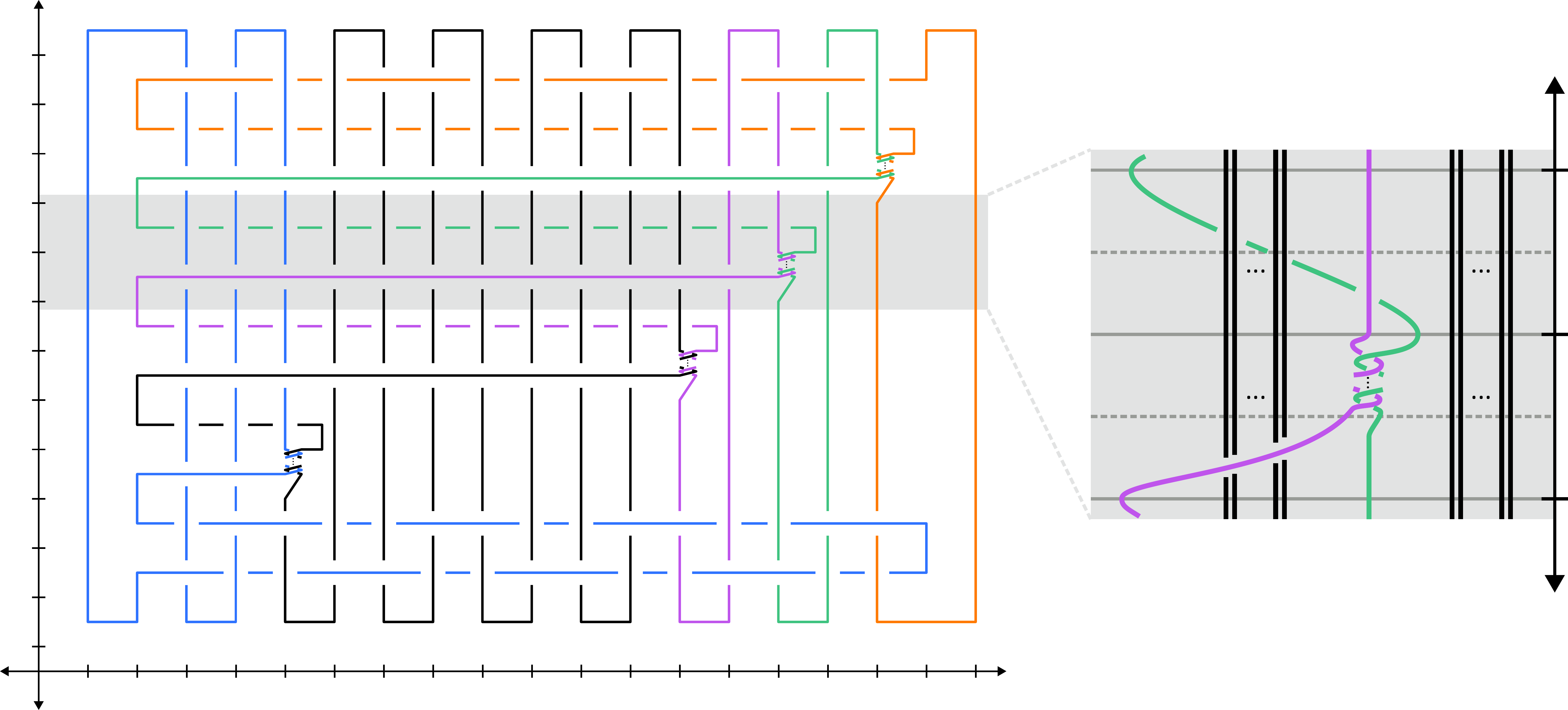}
\caption{On the left, an example of \(L_{6,9}\), with each component a different color.
On the right, a more detailed view of a horizontal strip of the link diagram, representative of how we situate each of the twist regions with respect to the \(y\)-axis.
We embed the link so that if before the surgery, a distinguished crossing was at height \(s_0-0.5\) for some integer height \(s_0\), then after the surgery, the twist region is contained vertically between the heights of \(s_0-0.5\) and \(s_0\).
The vertical black lines in that picture just represent some unspecified number of vertical strands of the link.}
\label{fig:241028_9-6-example}
\end{figure}

Next we replace each distinguished crossing with a twist region as shown in Figure \ref{fig:241028_rotating-twists}.
To be more precise, we will perform \(m-2\) surgeries on \(K_{m,n}\), changing it into a link, as follows.
We identify \(m-2\) small disjoint balls, each of which contains a 2-component untangle of \(K_{m,n}\) such that the projection in the \(z\)-direction of that untangle to the \(xy\)-plane contains exactly one distinguished crossing and no other crossings.
We replace each of these balls with another ball containing a right handed twist region with an even number of half twists, as depicted in Figure \ref{fig:241028_rotating-twists}.
The result of these surgeries is a link with \(m-1\) components, which we call \(L_{m,n}\).
An example of \(L_{6,9}\) is shown in Figure \ref{fig:241028_9-6-example}.

In Section \ref{sec:WRT-H}, we use the 2-connected condition to show that \(H\) is an unperturbed \(n\)-bridge sphere for \(L_{m,n}\), and in Section \ref{sec:WRT-V}, we show that \(V\) is an unperturbed \(m\)-bridge sphere for \(L_{m,n}\).

\section{\(L_{m,n}\) is Vertically Unperturbed}\label{sec:WRT-H}

\subsection{More Definitions}

The sphere \(H\) cuts \(S^3\) into two balls, which we call \(B^+\) and \(B^-\) (with the \(H\)-maxima contained in \(B^+\)).
As a bridge sphere, \(H\) cuts \(L_{m,n}\) into two trivial tangles.
The tangle contained in \(B^+\) we call \(\tau^+\), and likewise, \(\tau^-\) is the tangle contained in \(B^-\).

We will be interested in the sphere \(H\) as we perform isotopy, pushing it up and down between the \(H\)-maxima and \(H\)-minima.
To make this more formal, for each \(s\in{[0,2m-1]}\) let \({H}(s)\) be a sphere containing the horizontal square disk \({[0,2n+2]}\times\{s\}\times{[-1,1]}\).
We choose this collection of spheres in such a way that they give us a smooth isotopy from \(H=H(0)\) to \(H(2m-1)\).

At certain heights, all of the link's punctures are contained in a straight line.
These heights are given by the set \(S=\{0,1,2,\hdots,2m-1\}\cup\{2.5, 3.5, 4.5,\hdots,2m-3.5\}\).
(These are the integer heights and the heights directly below each twist region.)
For each height \(s\in S\), let \(\ell(s)\) be a simple loop in \(H(s)\) with the property that \(\ell(s)\) intersects the horizontal square disk \({[0,2n+2]}\times\{s\}\times{[-1,1]}\) in a straight line segment from the point \({(0,s,0)}\) to the point \({(2n+2,s,0)}\).
Thus for each \(s\in S\), all the punctures of \(L_{m,n}\cap H(s)\) are contained in \(\ell(s)\).
From left to right, we will name these punctures \(p_1(s)\), \(p_2(s)\), \(p_3(s)\), \(\hdots\), \(p_{2n}(s)\).
That is, \(p_i(s)\) is the \(i\)\textsuperscript{th} puncture along \(\ell(s)\).
Let \(\ell_i(s)\) be the closed arc of \(\ell(s)\) whose endpoints are \(p_i(s)\) and \(p_{i+1}(s)\) (where the addition is taken mod \(2n\)) and whose interior contains no other punctures.
The loop \(\ell(s)\) cuts \(H(s)\) into two hemispheres.
Let \(H_+(s)\) be the hemisphere containing the rectangular disk \({[0,2n+2]}\times\{s\}\times{[0,1]}\), and let \(H_-(s)\) be the other hemisphere.
Set \(H_+=H_+(0)\) and \(H_-=H_-(0)\).

For each \(i\in\{1,2,3,\hdots,n\}\), let \(\bar{E}_i^+\) and \(E_i^-\) be the gray disks shown in Figure \ref{fig:241007_Knot}, let \(\sigma_i^-=E_i^-\cap H\), and let \(\sigma_i^+(2m-1)=\bar{E}_i^+\cap H(2m-1)\).
For each integer \(s\in\{0,1,2,\hdots,2m-1\}\), the isotopy taking \(H(2m-1)\) to \(H(s)\) restricts to an isotopy from \(\sigma_i^+(2m-1)\) to an arc in \(H(s)\).
We call this image-arc \(\sigma_i^+(s)\), and we set \(\sigma_i^+=\sigma_i^+(0)\).
As we follow \(\sigma_i^+(2m-1)\) down through this isotopy, it traces out a disk with four sides (a disk which contains the arc \(\sigma_i^+(s)\) for all \(s\)). 
The top side of this traced-out disk is \(\sigma_i^+(2m-1)\), the bottom side is \(\sigma_i^+\), and the other two sides are subarcs of \(L_{m,n}\).
We will set \(E_i^+\) equal to the union of this disk with \(\bar{E}_i^+\).
Observe that the disks in \(\{E_i^+\}_i\) are pairwise disjoint, and the sets \(\{E_i^+\}_i\) and \(\{E_i^-\}_i\) are both complete collections of bridge disks for \(H\).

Let \(\sigma^+=\bigcup_{i=1}^n\sigma_i^+\) and \(\sigma^-=\bigcup_{i=1}^n\sigma_i^-\).
The loop \(\ell\) contains \(\sigma^-\) by construction, and we  may assume that \(\sigma^+\) is in minimal position with respect to \(\ell\).
Observe that \(\sigma^+\) and \(\sigma^-\) together comprise a bridge diagram for the pair \({(H,L_{m,n})}\).

\subsection{The 2-connected Condition and \(n\)-bundles}\label{subsec:two-conned}

A graph is \textit{2-connected} if for each vertex \(v\), the graph remains connected after removing \(v\) and all edges incident to \(v\).

In \cite{jang2016knot}, Jang et al.\ present a criterion on the pair \({({(\sigma^+,\sigma^-)},\ell)}\) which, when satisfied, guarantees that the corresponding bridge sphere is strongly irreducible.
They call this criterion the \textit{2-connected condition}, which we define below.

Observe that \(\sigma^-\) lies along the shared boundary of \(H_+\) and \(H_-\).
Fix \(i,j\in\{1,2,\hdots,n\}\) with \(i\neq j\), and fix \(\varepsilon\in\{+,-\}\).
Observe that \(\sigma^+\cap H_{\varepsilon}\) consists of a collection of arcs properly embedded in \(H_{\varepsilon}\), each of which is a subarc of \(\sigma_u^+\) for some \(u\). 
Each component of \(\sigma^+\cap H_{\varepsilon}\) cuts \(H_{\varepsilon}\) into two subdisks.
Let \(\mathcal{A}_{i,j,\varepsilon}\) be the subcollection of these arcs which separate \(\sigma_i^-\) in one subdisk from \(\sigma_j^-\) in the other subdisk.

Next we construct a simple\footnote{A simple graph contains no multiedges.} graph \(\mathcal{G}_{i,j,\varepsilon}\) from \(\mathcal{A}_{i,j,\varepsilon}\) as follows: Let the vertices of \(\mathcal{G}_{i,j,\varepsilon}\) be the integers \(1, 2,\hdots,n\), and let distinct vertices \(v\) and \(w\) be connected by an edge in \(\mathcal{G}_{i,j,\varepsilon}\) if and only if two components of \(\mathcal{A}_{i,j,\varepsilon}\) are a subarc \(\bar{\sigma}_v^+\subset\sigma_v^+\) and a subarc \(\bar{\sigma}_w^+\subset\sigma_w^+\) which are adjacent to each other.
By adjacent, we mean that \(H_{\varepsilon}\backslash\mathcal{A}_{i,j,\varepsilon}\) has a component whose closure contains both \(\bar{\sigma}_v^+\) and \(\bar{\sigma}_w^+\).
Now we are prepared to state the 2-connected condition from \cite{jang2016knot}: the bridge diagram \(\left(\left(\sigma^+,\sigma^-\right), \ell\right)\) satisfies the 2-connected condition if the graph \(\mathcal{G}_{i,j,\varepsilon}\) is 2-connected for all \(i,j,\varepsilon\).

The following theorem is due to Jang et al.\ in \cite{jang2016knot}.

\begin{theorem}[Jang, Kobayashi, Ozawa, and Takao]\label{thm:two-con-str-irr}
An \(n\)-bridge sphere \(S\) with \(n\geq 3\) of a link is strongly irreducible if there exist a bridge diagram \(\left(\sigma^+,\sigma^-\right)\) of the bridge sphere \(S\) and a loop \(\ell\) on \(S\) such that the pair \(\left(\left(\sigma^+,\sigma^-\right), \ell\right)\) satisfies the \(2\)-connected condition.
\end{theorem}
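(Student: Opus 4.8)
The plan is to prove the contrapositive of the implication encoded in the definition of strong irreducibility: assuming $S$ is \emph{not} strongly irreducible, I will exhibit a violation of the $2$-connected condition. So suppose there are essential disks $D^{+}$ above $S$ and $D^{-}$ below $S$ with $\partial D^{+}\cap\partial D^{-}=\emptyset$ on $S$.

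First I would normalize each of $D^{\pm}$ against the complete collection of bridge disks on its own side. Clearing circles of intersection with $\bigcup_i E_i^{+}$ by innermost-disk isotopies and then minimizing the number of arcs of $D^{+}\cap\bigcup_i E_i^{+}$, the outermost such arcs cut off ``wave'' subdisks $E_0\subset E_i^{+}$ bounded by an arc of $\partial D^{+}$ together with a subarc of $\sigma_i^{+}$; exploiting these, one obtains the \emph{$n$-bundle} normal form in which $\partial D^{+}$ is disjoint from $\sigma^{+}$ and, inside each hemisphere $H_{\varepsilon}$, the arcs $\partial D^{+}\cap H_{\varepsilon}$ run parallel to chains of consecutive $\sigma_u^{+}$-subarcs. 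Treating $D^{-}$ symmetrically relative to $\{E_i^{-}\}$ (here using that $\sigma^{-}$ lies on $\ell$) lets me take $\partial D^{-}$ disjoint from $\sigma^{-}$, while retaining $\partial D^{+}\cap\partial D^{-}=\emptyset$ and keeping all intersection numbers minimal.

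Next I would locate the relevant data. Since $\partial D^{-}$ is disjoint from $\sigma^{-}$ and essential on $S$, it splits $S$ into two disks each carrying at least two punctures, hence each containing at least one full arc $\sigma_k^{-}$; so there are $i\neq j$ with $\sigma_i^{-}$ and $\sigma_j^{-}$ on opposite sides of $\partial D^{-}$. Because $\partial D^{+}$ is a single circle disjoint from $\partial D^{-}$ it lies on one of those sides, and combining this constraint with the essentiality of $D^{+}$ and the $n$-bundle normal form, one extracts a pair $i\neq j$ and a sign $\varepsilon$ for which the arcs of $\partial D^{+}$ in $H_{\varepsilon}$ that separate $\sigma_i^{-}$ from $\sigma_j^{-}$ are nonempty and are carried by arcs of $\mathcal{A}_{i,j,\varepsilon}$, with consecutive such arcs running alongside \emph{adjacent} $\sigma^{+}$-subarcs. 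Tracking how $\partial D^{+}$ weaves through $H_{\varepsilon}$ then shows that for it to actually separate $\sigma_i^{-}$ from $\sigma_j^{-}$ there must be a vertex of $\mathcal{G}_{i,j,\varepsilon}$ whose removal disconnects the graph — contradicting $2$-connectedness, and proving the theorem.

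The technical heart, and the step I expect to be the main obstacle, is the normal-form analysis together with the dictionary it supplies: proving rigorously that a compressing disk above, after wave-reductions, is an $n$-bundle carried by $\sigma^{+}$, and that ``$\partial D^{+}$ separates $\sigma_i^{-}$ from $\sigma_j^{-}$'' corresponds precisely to a cut vertex of $\mathcal{G}_{i,j,\varepsilon}$. Making this watertight requires careful bookkeeping of how $\partial D^{+}$ oscillates between $H_{+}$ and $H_{-}$ past the punctures, and checking that the wave-surgeries neither destroy essentiality nor inflate the minimized intersection numbers; with that infrastructure in hand, the graph-theoretic endgame is short.
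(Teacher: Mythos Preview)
The paper does not prove this theorem at all: it is quoted verbatim as a result of Jang, Kobayashi, Ozawa, and Takao from \cite{jang2016knot} and then used as a black box in the proof of Proposition~\ref{prop:vertically-unperturbed}. There is therefore no ``paper's own proof'' to compare your proposal against.

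That said, a word on the proposal itself. The overall architecture you describe --- argue the contrapositive, normalize $\partial D^{\pm}$ against the bridge-disk systems via innermost/outermost surgeries, and then read off a failure of $2$-connectedness of some $\mathcal{G}_{i,j,\varepsilon}$ --- is indeed the shape of the argument in \cite{jang2016knot}. But your sketch is not yet a proof at the crucial step. You assert that, after normalization, ``tracking how $\partial D^{+}$ weaves through $H_{\varepsilon}$'' forces a cut vertex of $\mathcal{G}_{i,j,\varepsilon}$; this is exactly the content that needs to be established, and it does not follow from anything you have written. In particular: (i) it is $\partial D^{-}$, not $\partial D^{+}$, that you arranged to separate $\sigma_i^{-}$ from $\sigma_j^{-}$, so the passage from ``$\partial D^{+}$ lies on one side of $\partial D^{-}$'' to ``arcs of $\partial D^{+}$ in $H_{\varepsilon}$ separate $\sigma_i^{-}$ from $\sigma_j^{-}$'' is a non sequitur as stated; (ii) the claimed ``$n$-bundle normal form'' for $\partial D^{+}$ is not a standard object and you have not defined it precisely enough to deduce adjacency information in $\mathcal{G}_{i,j,\varepsilon}$; and (iii) the actual mechanism in \cite{jang2016knot} that produces the cut vertex involves a careful outermost-arc argument identifying a specific index $k$ such that every separating subarc in $\mathcal{A}_{i,j,\varepsilon}$ adjacent to a particular region comes from $\sigma_k^{+}$ --- this is the combinatorial core, and it is absent from your write-up. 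As written, the proposal is a plausible outline but with a genuine gap at the translation from disjoint compressing disks to a cut vertex.
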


\begin{figure}
\labellist
\small\hair 2pt
\pinlabel {\(\ell(s)\)} [l] at 853 8
\pinlabel {\colorbox{white}{\(n\)}} at 427 30
\pinlabel {\colorbox{white}{\(n-1\)}} at 427 51.5
\pinlabel {\colorbox{white}{\(n-2\)}} at 427 73
\pinlabel {\colorbox{white}{4}} at 427 116
\pinlabel {\colorbox{white}{3}} at 427 137.5
\pinlabel {\colorbox{white}{2}} at 427 159
\pinlabel {\colorbox{white}{1}} at 427 180.5
\pinlabel {\colorbox{white}{\(n\)}} at 427 202
\endlabellist
\centering
\includegraphics[width=.8\textwidth]{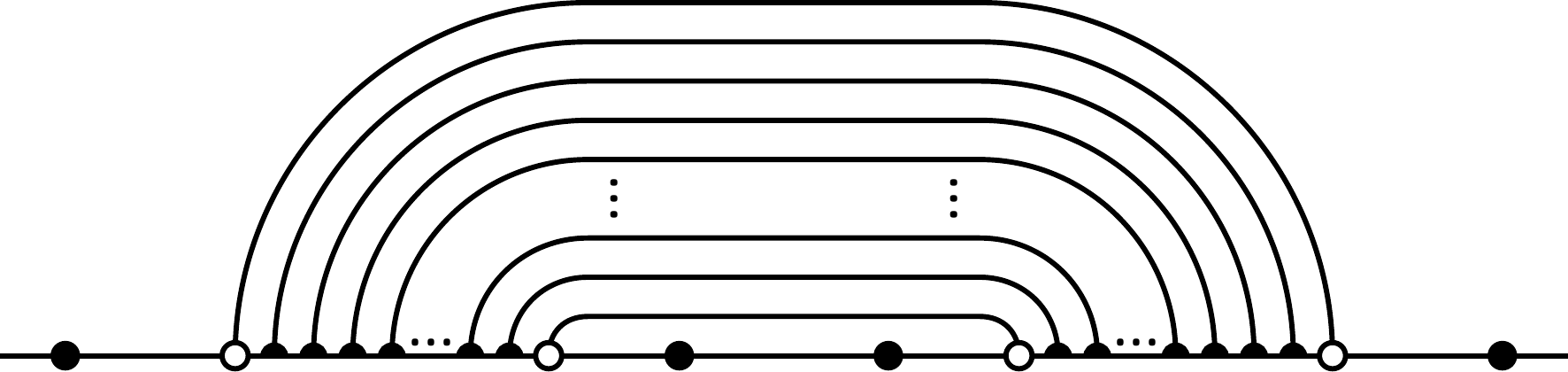}
\caption{An example of an \(n\)-bundle. 
An arc marked with a number \(i\) is a subarc of \(\sigma_i^+(s)\).
Punctures (i.e., points of \(\ell(s)\cap L_{m,n}\)) are marked with black circles.
The arc endpoints marked with half circles are not punctures.
Of the arc endpoints marked with white circles, at most one of them is a puncture.}
\label{fig:241112_n-bundle}
\end{figure}

It is known that a strongly irreducible bridge sphere is unperturbed, so to ultimately show that \(H\) is unperturbed, we need to show that \(\left(\left(\sigma^+,\sigma^-\right), \ell\right)\) satisfies the \(2\)-connected condition, which by Theorem \ref{thm:two-con-str-irr} implies \(H\) is strongly irreducible and therefore unperturbed.
Before we do that, we will construct one more definition which will be useful for Lemma \ref{lem:inductive}.
Fix a height \(s\) and a sign \(\varepsilon\in\{+,-\}\).
An \textit{\(n\)-bundle in \(H_{\varepsilon}(s)\)} (or just an \textit{\(n\)-bundle} when the context is clear) is a collection of arc components of \(\sigma^+(s)\cap H_{\varepsilon}(s)\) such that
\begin{enumerate}
\item for each \(i\in\{1,2,\hdots,n\}\), the collection includes a subarc of \(\sigma_i^+(s)\) and a subarc of \(\sigma_{i+1}^+(s)\) which are adjacent to each other in \(H_{\varepsilon}(s)\) (where the addition is taken mod \(n\)), 
\item there exist distinct \(k_1,k_2\in\{1,2,\hdots,2n\}\) such that each arc in the collection has one endpoint on \(\ell_{k_1}(s)\) and the other endpoint on \(\ell_{k_2}(s)\), and 
\item of all the endpoints of the arcs in this collection, at most one of them is a puncture.
\end{enumerate}
An example of an \(n\)-bundle is illustrated in Figure \ref{fig:241112_n-bundle}.
We can represent an \(n\)-bundle with a single arc whose endpoints are on \(\ell(s)\).
We begin representing \(n\)-bundles in this way in Figure \ref{fig:241024_path}, and we always illustrate them using a dash-dot line style.
If the \(n\)-bundle is disjoint from all punctures (which is most often the case for us), we naturally draw its representative arc with endpoints on \(\ell(s)\backslash L_{m,n}\).
If the \(n\)-bundle contains exactly one puncture, we draw the representative arc with one endpoint on that puncture and the other endpoint on \(\ell(s)\backslash L_{m,n}\).
(This comes up only once for us: in Figure \ref{fig:241024_path}, and we provide a zoomed-in view in that case for clarity.)

The rest of this section will demonstrate that \({({(\sigma^+,\sigma^-)},\ell)}\) satisfies the 2-connected condition. 
This will require finding a collection of \(n\)-bundles contained in \(\sigma^+\) by following the isotopy of \(\sigma^+(2m-1)\subset H(2m-1)\) down to level \(0\).
Since \(m\) is unbounded above, the height of \(L_{m,n}\) can be arbitrarily large, but Lemma \ref{lem:inductive} allows us to jump from the top all the way down to level 2.
The lemma does not give us a picture of all of \(\sigma^+(2)\), only three particular \(n\)-bundles contained in \(\sigma^+(2)\), but that turns out to be all we will need.

\begin{figure}
\labellist
\small\hair 2pt
\pinlabel \rotatebox{90}{\(1\)} [b] at 1350 700
\pinlabel \rotatebox{90}{\(2\)} [b] at 1385.4 700
\pinlabel \rotatebox{90}{\(3\)} [b] at 1420.8 700
\pinlabel \rotatebox{90}{\(n-2\)} [b] at 1491.7 700
\pinlabel \rotatebox{90}{\(n-1\)} [b] at 1527.1 700
\pinlabel \rotatebox{90}{\(n\)} [b] at 1562.5 700
\pinlabel \rotatebox{90}{\(n-1\)} [b] at 1597.9 700
\pinlabel \rotatebox{90}{\(n-2\)} [b] at 1633.3 700
\pinlabel \rotatebox{90}{\(3\)} [b] at 1704.2 700
\pinlabel \rotatebox{90}{\(2\)} [b] at 1739.6 700
\pinlabel \rotatebox{90}{\(1\)} [b] at 1775 700

\pinlabel {\(\ell(2m-3)\)} [l] at 1070 660
\pinlabel {\(\ell(2m-3)\)} [l] at 1070 160

\pinlabel {\(n-3\)} [l] at -75 700
\pinlabel {\(n-3\)} [l] at 165 730
\pinlabel {\(n-2\)} [l] at 205 620
\pinlabel {\(n-2\)} [l] at 440 560
\pinlabel {\(n-1\)} [l] at 490 700
\pinlabel {\(n-1\)} [l] at 730 730
\pinlabel {\(n\)} [l] at 1000 560

\pinlabel {\(n-3\)} [l] at -75 130
\pinlabel {\(n-3\)} [l] at 165 100
\pinlabel {\(n-2\)} [l] at 210 200
\pinlabel {\(n-2\)} [l] at 440 245
\pinlabel {\(n-1\)} [l] at 700 50
\pinlabel {\(n-1\)} [l] at 500 130
\pinlabel {\(n\)} [l] at 1000 75

\endlabellist
\centering
\includegraphics[width=1\textwidth]{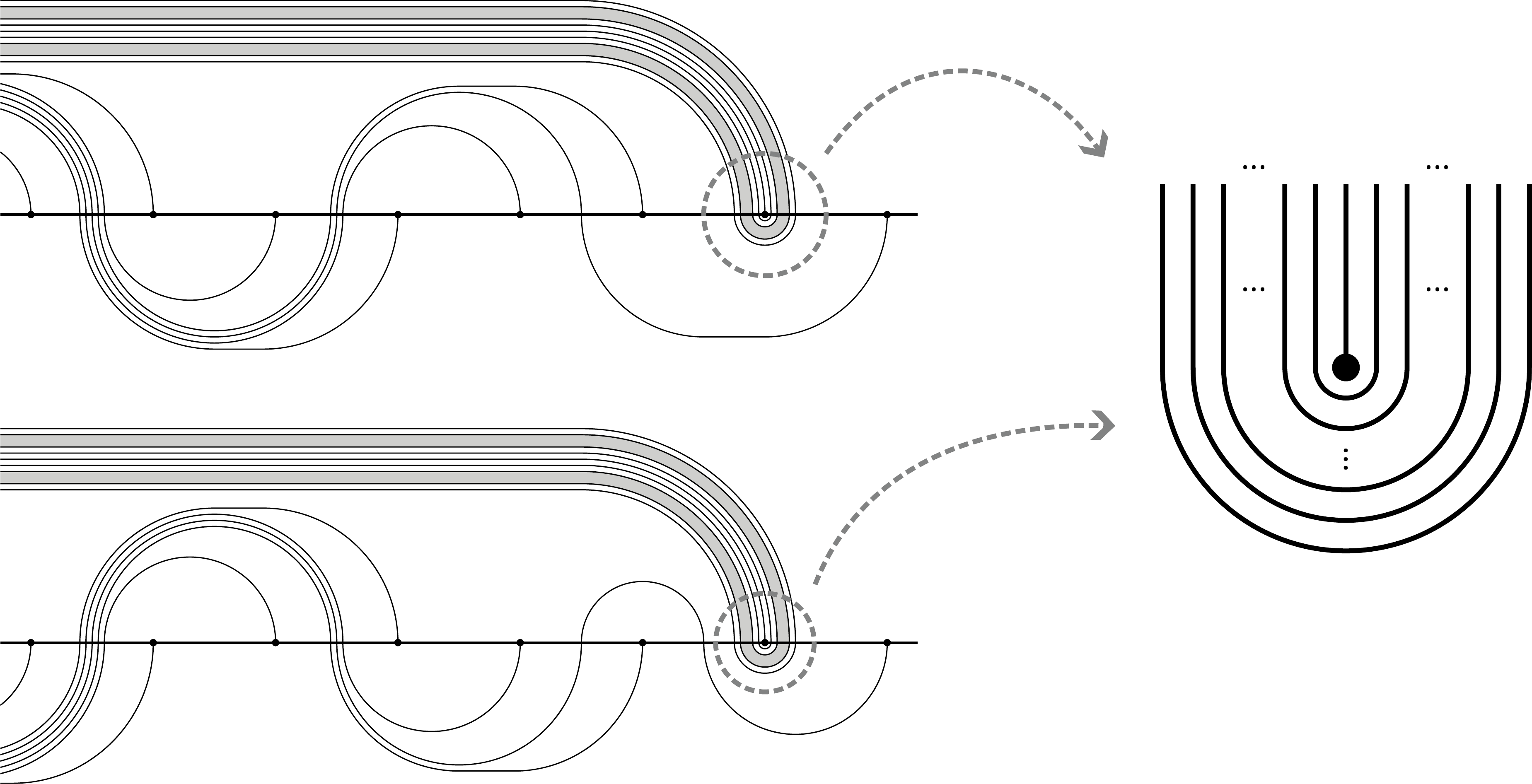}
\caption{This is essentially a recreation of Figure 12 from \cite{jang2016knot}.
Above on the left, we see the right part of \(\sigma^+(2m-3)\) when \(n\) is even, and below that, we see the right part of \(\sigma^+(2m-3)\) when \(n\) is odd.
Any arc labeled with an integer \(i\) is a subarc of \(\sigma^+_i(2m-3)\).}
\label{fig:241022_level_2m-3}
\end{figure}

\begin{figure}
\labellist
\small\hair 2pt
\pinlabel {\(n-2\)} [r] at 20 140
\pinlabel {\(n-2\)} [l] at 175 140
\pinlabel {\(n-1\)} [r] at 310 220
\pinlabel {\(n-1\)} [br] at 600 135
\pinlabel {\(n\)} at 750 130

\pinlabel {\(n-2\)} [l] at 1025 220
\pinlabel {\(n-1\)} [r] at 1160 140
\pinlabel {\(n\)} at 1600 130

\endlabellist
\centering
\includegraphics[width=1\textwidth]{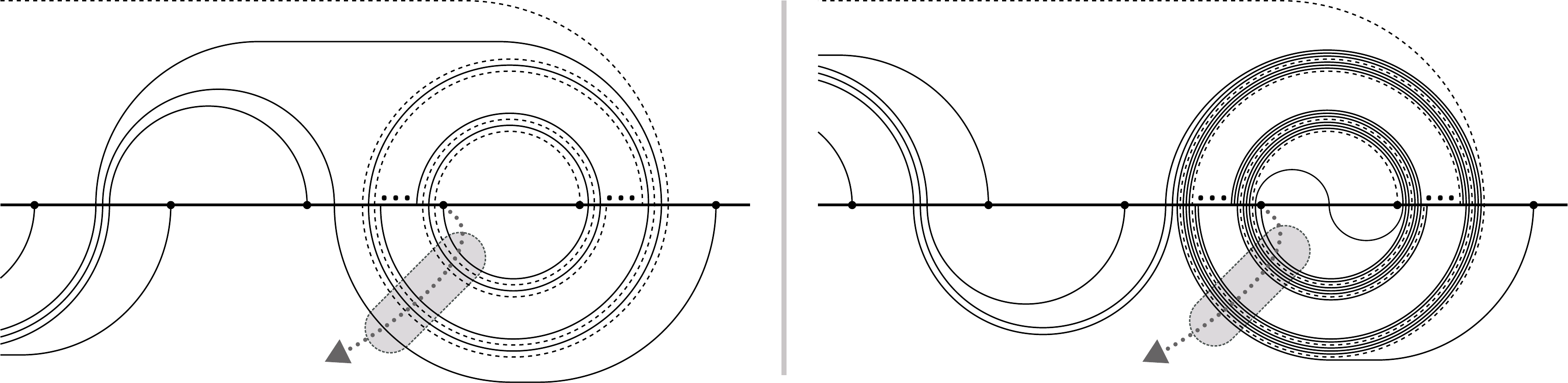}
\caption{We see \(\sigma^+(2m-3.5)\) when \(n\) is even (left) and odd (right).
The horizontal line represents \(\ell(2m-3.5)\).
The dashed arc illustrated as having a puncture as one of its endpoints represents a whole collection of arcs of \(\sigma^+(2m-3.5)\).
At the puncture, the picture looks like the right side of Figure \ref{fig:241022_level_2m-3}.
Away from the puncture, the dashed arc represents \(2n-1\) parallel strands.
A detailed image of the gray oval is found in Figure \ref{fig:241024_sequences}.}
\label{fig:241031_level-2m-minus-3point5}
\end{figure}

\begin{figure}
\labellist
\small\hair 2pt
\pinlabel {\(n\)} [r] at 0 85
\pinlabel {\(n-1\)} [r] at 0 127.9
\pinlabel {\(n-1\)} [r] at 0 156.4
\pinlabel {\(n-1\)} [r] at 0 256.4
\pinlabel {\(n-1\)} [r] at 0 285

\pinlabel {1} [r] at 225 99
\pinlabel {2} [r] at 225 113.3
\pinlabel {3} [r] at 225 127.5
\pinlabel {\(n-2\)} [r] at 225 156
\pinlabel {\(n-1\)} [r] at 225 170.3
\pinlabel {\(n\)} [r] at 225 184.5
\pinlabel {\(n-1\)} [r] at 225 198.8
\pinlabel {\(n-2\)} [r] at 225 213
\pinlabel {3} [r] at 225 241.5
\pinlabel {2} [r] at 225 255.8
\pinlabel {1} [r] at 225 270

\pinlabel {\(n\)} [l] at 570 65
\pinlabel {\(n\)} [l] at 570 93.4
\pinlabel {\(n-1\)} [l] at 570 107.5
\pinlabel {\(n\)} [l] at 570 121.7
\pinlabel {\(n\)} [l] at 570 150.1
\pinlabel {\(n-1\)} [l] at 570 164.2
\pinlabel {\(n\)} [l] at 570 206.8
\pinlabel {\(n\)} [l] at 570 235.1
\pinlabel {\(n-1\)} [l] at 570 249.3
\pinlabel {\(n\)} [l] at 570 263.5
\pinlabel {\(n\)} [l] at 570 291.8
\pinlabel {\(n-1\)} [l] at 570 306

\endlabellist
\centering
\includegraphics[width=.63\textwidth]{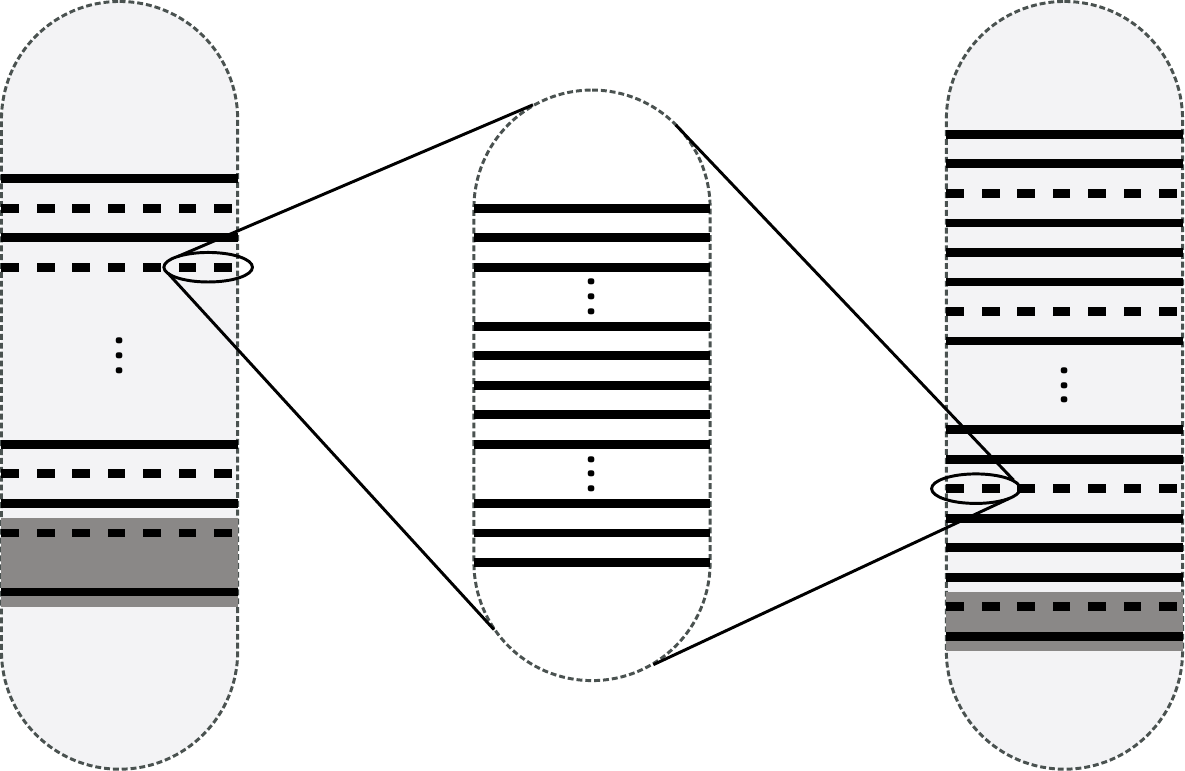}
\caption{On the left (for even \(n\)) and right (for odd \(n\)) are more detailed images of of the gray oval regions in Figure \ref{fig:241031_level-2m-minus-3point5}.
Each dashed line represents the collection of parallel arcs of \(\sigma^+(2m-3.5)\) illustrated in the middle picture.
Most importantly, the dark gray band at the bottom highlights an \(n\)-bundle.}
\label{fig:241024_sequences}
\end{figure}

\begin{figure}
\labellist
\small\hair 2pt
\pinlabel {\(\ell(2m-4)\)} [r] at 0 143
\pinlabel {\(\ell(2m-3.5)\)} [r] at 0 428
\endlabellist
\centering
\includegraphics[width=.5\textwidth]{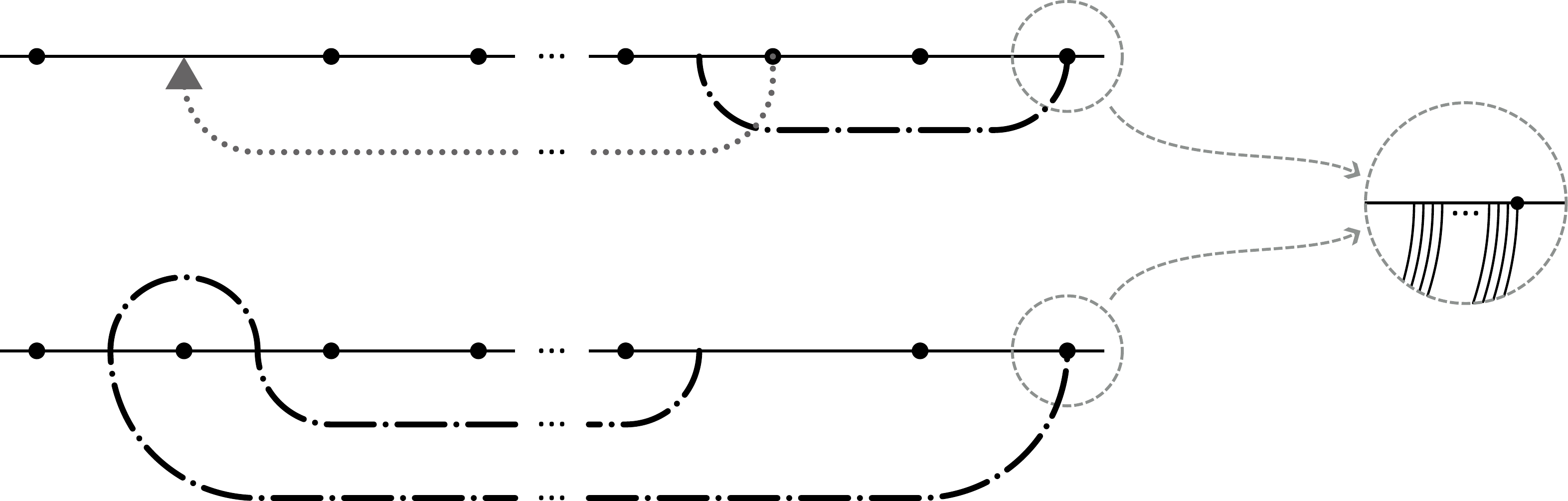}
\caption{In this and the following figures, curves drawn with a dash-dot pattern are \(n\)-bundles.
The gray dotted line with an arrow represents the movement of one of the punctures under the isotopy of the sphere from level \(2m-3.5\) down to level \(2m-4\).
For each level \(s\) pictured here, the leftmost puncture shown is \(p_1(s)\) and the right most puncture shown is \(p_{2n}(s)\).
Importantly, the \(n\)-bundle found in \(H_-(2m-3.5)\) gives rise to an \(n\)-bundle in \(H_+(2m-4)\).}
\label{fig:241024_path}
\end{figure}

\begin{figure}
\labellist
\small\hair 2pt
\pinlabel {\(\ell(s_0)\)} [l] at 790 720
\pinlabel {\(\ell(s_0-1.5)\)} [l] at 790 288.7
\pinlabel {\(\ell(s_0-1)\)} [l] at 790 504.3
\pinlabel {\(\ell(s_0-2)\)} [l] at 790 73
\endlabellist
\centering
\includegraphics[width=.4\textwidth]{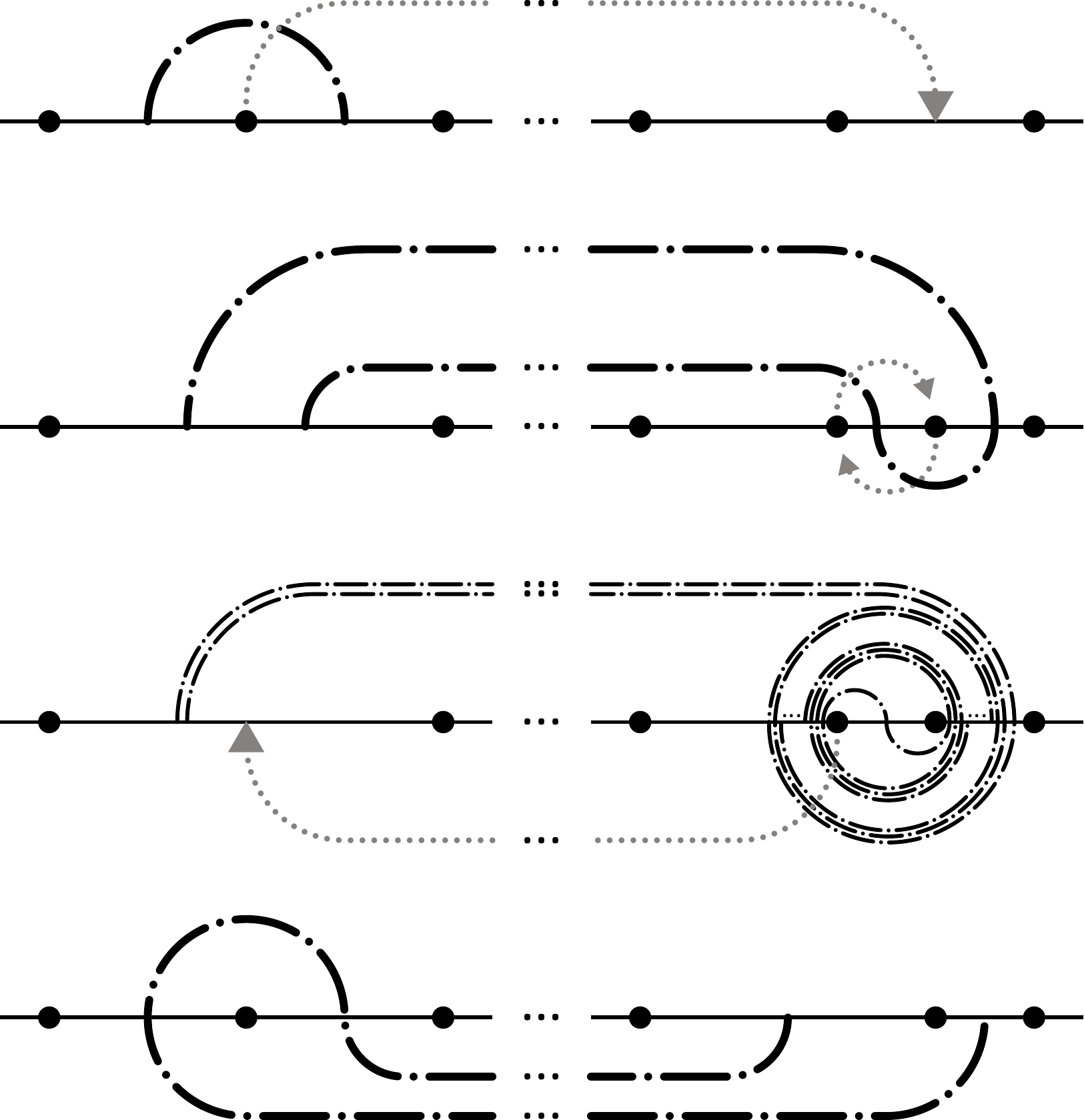}
\caption{Illustration of the inductive step for Lemma \ref{lem:inductive}.
In each of the four levels \(s\) pictured here, the leftmost puncture is \(p_1(s)\), but the three or four punctures to the right of the ellipsis are not the rightmost punctures along \(\ell(s)\).
In the third picture, the \(n\)-bundles are drawn with a thinner stroke width in order to properly show the spiraling around the two punctures.
In the bottom picture we see three resulting \(n\)-bundles, one in \(H_+(s_0-2)\) and two in \(H_-(s_0-2)\).}
\label{fig:241024_path2}
\end{figure}

\begin{lemma}\label{lem:inductive}
The bridge sphere \(H(2)\) contains the following three \(n\)-bundles:
\begin{enumerate}
    \item an \(n\)-bundle in \(H^+(2)\) with endpoints on the interiors of \(\ell_1(2)\) and \(\ell_2(2)\),
    \item an \(n\)-bundle in \(H^-(2)\) with endpoints on the interiors of \(\ell_2(2)\) and \(\ell_4(2)\). 
    \item an \(n\)-bundle in \(H^-(2)\) with endpoints on the interiors of \(\ell_1(2)\) and \(\ell_5(2)\). 
\end{enumerate}
\end{lemma}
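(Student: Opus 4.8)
The plan is to prove Lemma~\ref{lem:inductive} by a downward induction on the level $s$, tracking the arc system $\sigma^+(s)$ as the sphere $H(s)$ is pushed down from the top of the diagram at $s=2m-1$ toward level $s=2$. At the top, $\sigma^+(2m-1)=\bigcup_i\big(\bar{E}_i^+\cap H(2m-1)\big)$ is completely explicit from Figure~\ref{fig:241007_Knot}, so the base case is to follow these arcs down past the topmost distinguished crossing --- after surgery, a twist region lying between heights $2m-3.5$ and $2m-3$ --- to level $2m-4$. This is the computation recorded in Figures~\ref{fig:241022_level_2m-3}, \ref{fig:241031_level-2m-minus-3point5}, \ref{fig:241024_sequences}, and \ref{fig:241024_path}, which recreate and mildly generalize Figure~12 and the surrounding analysis of \cite{jang2016knot}. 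Its output should be three $n$-bundles at level $2m-4$ in the same configuration as in the lemma with $2$ replaced by $2m-4$: one in $H_+(2m-4)$ with endpoints on the interiors of $\ell_1(2m-4)$ and $\ell_2(2m-4)$, and two in $H_-(2m-4)$, with endpoints on $\ell_2(2m-4),\ell_4(2m-4)$ and on $\ell_1(2m-4),\ell_5(2m-4)$. The one delicate point here is that an intermediate bundle at level $2m-3.5$ carries a puncture as an endpoint (Figures~\ref{fig:241031_level-2m-minus-3point5} and \ref{fig:241024_sequences}), so one follows it down through the isotopy to level $2m-4$ using the zoomed-in view in Figure~\ref{fig:241024_path} and checks that the puncture is released; this is the only point where the ``at most one puncture'' clause in the definition of an $n$-bundle is invoked.

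The inductive step is the content of Figure~\ref{fig:241024_path2}, and I would set it up as follows. Suppose that for some even integer $s_0$ with $4\le s_0\le 2m-4$, the sphere $H(s_0)$ contains three $n$-bundles in the lemma's configuration with $2$ replaced by $s_0$. Pushing $H$ down by $2$ passes exactly one more twist region --- the one between heights $s_0-1.5$ and $s_0-1$ --- together with one more horizontal loop, and the descent factors through the levels $s_0-1$, $s_0-1.5$, $s_0-2$ of the figure; the goal is to read off that $H(s_0-2)$ again carries three $n$-bundles in the same configuration with parameter $s_0-2$. Three observations drive this: (i) the varying-length horizontal loops of $L_{m,n}$ are arranged so that, at each level, their interaction with $\sigma^+(s)$ takes place to the right of the arcs $\ell_1(s),\dots,\ell_5(s)$ on which our bundles terminate, so they leave the left-hand region undisturbed; (ii) crossing the twist region makes several arcs spiral around the two punctures adjacent to it, and straightening those arcs regroups the spiraled strands precisely into the three asserted bundles; and (iii) each resulting collection of arcs still satisfies all three clauses of the $n$-bundle definition --- every residue class mod $n$ occurs, consecutive classes are adjacent, and no endpoint is a puncture. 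Iterating this step $m-3$ times carries the base case at level $2m-4$ down through the remaining twist regions to level $2$, yielding the three $n$-bundles claimed; when $m=4$ the range of $s_0$ is just $\{4\}$ and the induction consists of the single step from level $4$ to level $2$.

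The step I expect to be the main obstacle is (ii): faithfully bookkeeping how the arcs of $\sigma^+(s)\cap H_\varepsilon(s)$ spiral as $H$ sweeps across a twist region, and confirming that the three $n$-bundles re-emerge attached to precisely $\ell_1,\ell_2$ in $H_+$ and to $\ell_2,\ell_4$ and $\ell_1,\ell_5$ in $H_-$, with no stray puncture among their endpoints. This is where the two design decisions built into $L_{m,n}$ pay off: each twist region has an \emph{even} number of half-twists and is situated between an integer height and the half-integer height just below it (cf.\ Figure~\ref{fig:241028_9-6-example}), so the windings the isotopy introduces cancel in pairs and the bundle configuration is reproduced verbatim one ``period'' of height $2$ lower --- which is exactly what makes the induction close. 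Carrying out this cancellation carefully, and verifying that the accompanying horizontal loop (whatever its length) genuinely stays clear of our bundles, is the crux; the rest is routine isotopy bookkeeping of the same flavor as in \cite{jang2016knot}.
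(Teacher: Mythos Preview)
Your overall architecture---downward induction from the top, with the base case harvested from Figure~12 of \cite{jang2016knot}---matches the paper. But your inductive hypothesis is heavier than the paper's, and one of your geometric claims is off.

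The paper's statement $P(s)$ records only the \emph{single} $n$-bundle in $H_+(s)$ with endpoints on $\ell_1(s)$ and $\ell_2(s)$. In the inductive step (Figure~\ref{fig:241024_path2}) that one bundle is dragged rightward as $p_2(s_0)$ follows its strand behind the other punctures, then spiraled by the twist region, then dragged back left; the output at level $s_0-2$ is one $H_+$ bundle (closing the induction) \emph{together with} two $H_-$ bundles, but the paper discards the latter at every step except the last. The second and third bundles in the lemma are read off only once, from the final step $s_0=4\to 2$, where the punctures to the right of the twist are exactly $p_3(2),\dots,p_6(2)$. So the lemma's $H_-$ bundles are a byproduct of the last descent, not a quantity carried through the induction.

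Your claim (i)---that the horizontal loops interact with $\sigma^+(s)$ only to the right of $\ell_5(s)$, leaving the left region undisturbed---is not how the argument runs. The left end of each horizontal loop is precisely the strand through $p_2$, so the bundle anchored on $\ell_1,\ell_2$ is pulled far to the right and back again during each period; nothing ``stays put.'' Consequently, if you try to carry all three bundles through the induction, you must also track what the rightward excursion of $p_2$ does to the two $H_-$ bundles whose endpoints lie on $\ell_1,\ell_2,\ell_4,\ell_5$---and that bookkeeping is exactly what the paper avoids by keeping the hypothesis minimal. Your plan is not wrong in spirit, but it would be substantially harder to execute, and the cancellation heuristic you sketch in (ii)--(iii) does not by itself show that the $H_-$ bundles land back on the correct $\ell_k$'s after each period.
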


We will use a reverse induction proof to find the first required \(n\)-bundle.
Then we will see that the existence of the second and third \(n\)-bundles quickly follows.

\begin{proof}
For a height \(s\in\{2, 4, 6, \hdots, 2m-4\}\), let \(P(s)\) be the following statement.
\textit{The hemisphere \(H^+(s)\) contains an \(n\)-bundle with one endpoint on \(\ell_1(s)\) and its other endpoint on \(\ell_2(s)\).}
We first show that \(P(2m-4)\) is true.
Then we show that for an even height \(s_0\), if \(P(s_0)\) is true, then \(P(s_0-2)\) follows, which allows us to conclude that \(P(2)\) is true by induction.

We start by establishing the base case, which is that \(P(2m-4)\) is true.
Since the top part of our link \(L_{m,n}\) matches the top part of \(K_n\) from \cite{jang2016knot}, we will start with Figure 12 of \cite{jang2016knot}, which illustrates \(\sigma^+(2m-3)\) (which for \cite{jang2016knot} is specifically \(\sigma^+(3)\)).
For our purposes, we only need the right side of their picture, which we reproduce here in Figure \ref{fig:241022_level_2m-3}.
Note that the picture is different depending on the parity of \(n\).

To move from level \(2m-3\) down to level \(2m-3.5\), the second-to-last and third-to-last punctures twist around each other.
This is illustrated in Figure \ref{fig:241031_level-2m-minus-3point5}, and a more detailed view of an important portion is shown in Figure \ref{fig:241024_sequences}, in which we find an \(n\)-bundle in \(\sigma^+(2m-3.5)\).

To move from level \(2m-3.5\) to level \(2m-4\), the third-to-last puncture moves leftward in front of all the other punctures save the first, thereby becoming the second puncture from the left at level \(2m-4\), as shown in Figure \ref{fig:241024_path}.
Note that we only focus on and illustrate what happens to one \(n\)-bundle and ignore the rest of \(\sigma^+(2m-3.5)\).
We have shown the base case \(P(2m-4)\) is true.

For the inductive step, consider Figure \ref{fig:241024_path2}.
Fix \(s_0\in\{4, 6, \hdots, 2m-4\}\) and assume \(P(s_0)\) is true, so \(\sigma^+(s_0)\cap H_+(s_0)\) contains the required \(n\)-bundle.
As the bridge sphere is isotoped down to level \(s_0-1\), the second puncture along \(\ell(s_0)\) follows its corresponding strand of \(L_{m,n}\), moving behind some number of other strands until it takes its place between two punctures some distance to the right.
Observe that this action pulls the \(n\)-bundle with it.
Then when moving from level \(s_0-1\) to level \(s_0-1.5\), that puncture and the one to its left perform some positive number of full twists around each other, and around them the \(n\)-bundle is forced to twist into a tao shape.
This results in some positive number of \(n\)-bundles in \(H_-(s_0-1.5)\) which straddle the two punctures involved in the twist region.
We focus on only one of these \(n\)-bundles and ignore the rest of \(\sigma^+(s_0-1.5)\).
Next, we move from level \(s_0-1.5\) to \(s_0-2\).
During this itotopy, of the two punctures involved in the twist region, the one which started and ended on the left moves leftward in front of all the other punctures to become the second puncture from the left at level \(s_0-2\), pulling the \(n\) bundle with it and resulting in the desired \(n\)-bundle at level \(s_0-2\), finishing the induction and allowing us to conclude that \(P(2)\) is true.
Thus we have the first \(n\)-bundle required in the statement of the lemma.

To find the second and third required \(n\)-bundles, observe that in the induction step, when \(s_0=4\), the ellipsis shown at the bottom of Figure \ref{fig:241024_path2} represents a segment of \(\ell(2)\) containing no punctures, and the six punctures in the picture are \(p_1(2)\), \(p_2(2)\), \(p_3(2)\), \(p_4(2)\), \(p_5(2)\), and \(p_6(2)\).
Then the two \(n\)-bundles shown in \(H_-(2)\) fulfill the requirements for the second and third \(n\)-bundles in the statement of this lemma. 
\end{proof}

Now we have three \(n\)-bundles in \(H(2)\), which will presently allow us to get a picture of a number of \(n\)-bundles in \(H\).
When \(n\) is even, we only need the first and second \(n\)-bundles from Lemma \ref{lem:inductive}, and when \(n\) is odd, we only need the first and third.
(These are shown in Figure \ref{fig:241031_2-to-1-path}.)
We will see that both the process and the result of the isotopy from level 2 to level 0 is the same as in \cite{jang2016knot}.
For completion, we will reexplain here as Proposition \ref{prop:vertically-unperturbed}.

\begin{figure}
\labellist
\small\hair 2pt
\pinlabel {\(\ell(2)\)} [l] at 1915 297
\pinlabel {\(\ell(2)\)} [l] at 1915 85
\endlabellist
\centering
\includegraphics[width=.8\textwidth]{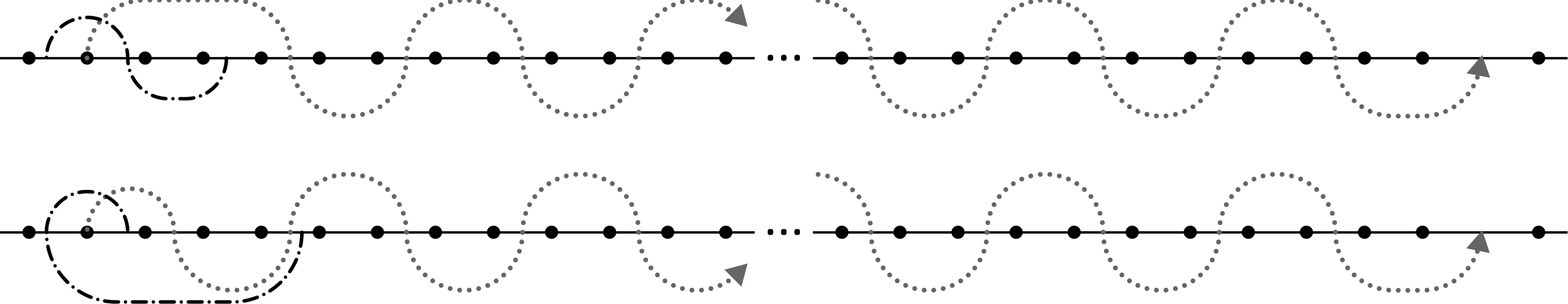}
\caption{Above, \(n\) even. 
Below, \(n\) odd.
Here we see two \(n\)-bundles at level 2, guaranteed by Lemma \ref{lem:inductive}.
The gray dotted line shows the path that the second puncture will take when the bridge sphere is isotoped down to level 1.
Here and in the following figures, the leftmost and rightmost punctures shown are \(p_1(s)\) and \(p_{2n}(s)\), respectively.
}
\label{fig:241031_2-to-1-path}
\end{figure}

\begin{figure}
\labellist
\small\hair 2pt
\pinlabel {\(\ell(1)\)} [l] at 1915 310
\pinlabel {\(\ell(1)\)} [l] at 1915 100
\endlabellist
\centering
\includegraphics[width=.8\textwidth]{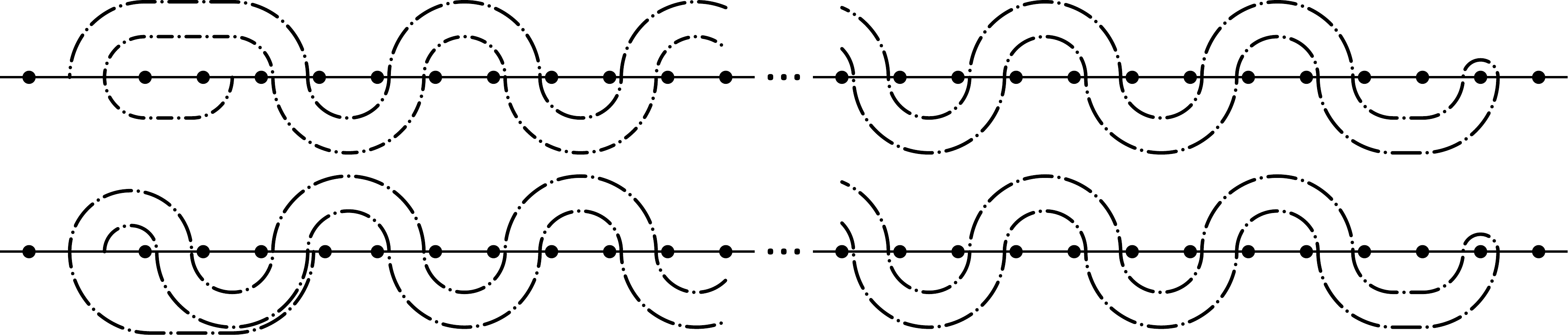}
\caption{Above, \(n\) even. 
Below, \(n\) odd.
The isotopy of the bridge sphere from level 2 to level 1 results in this collection of \(n\)-bundles.}
\label{fig:241031_level-1}
\end{figure}

\begin{figure}
\labellist
\small\hair 2pt
\pinlabel {\(\ell(1)\)} [l] at 1915 290
\pinlabel {\(\ell(1)\)} [l] at 1915 78
\endlabellist
\centering
\includegraphics[width=.8\textwidth]{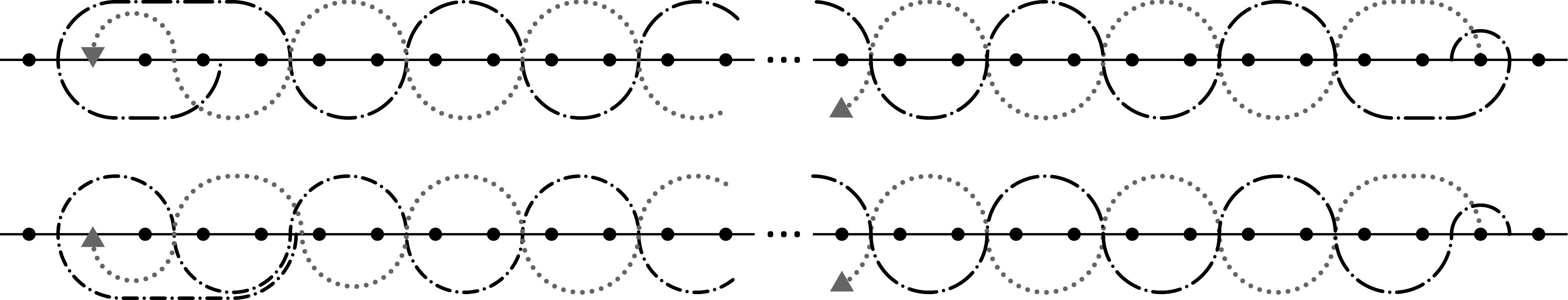}
\caption{Above, \(n\) even. 
Below, \(n\) odd.
In this figure we ignore roughly half of the \(n\)-bundles shown in Figure \ref{fig:241031_level-1}.
The gray dotted line shows the path that the second-to-last puncture shown will take when the bridge sphere is isotoped down to level 0.}
\label{fig:241031_level-1-and-path}
\end{figure}

\begin{figure}
\labellist
\small\hair 2pt
\pinlabel {\(\ell\)} [l] at 1945 295
\pinlabel {\(\ell\)} [l] at 1945 78
\endlabellist
\centering
\includegraphics[width=.8\textwidth]{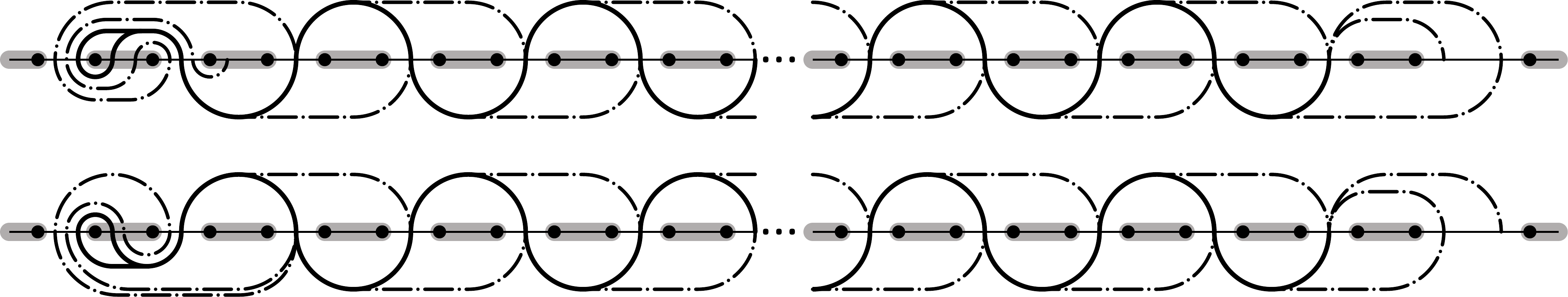}
\caption{Above, \(n\) even. 
Below, \(n\) odd.
The solid black curves represent collections of multiple parallel \(n\)-bundles.
The arcs of \(\ell\) highlighted in gray are \(\sigma_1^-, \sigma_2^-, \hdots, \sigma_n^-\).}
\label{fig:241031_level-0}
\end{figure}

\begin{figure}
\labellist
\small\hair 2pt
\pinlabel {\(\ell\)} [l] at 1945 145
\pinlabel {\(\ell\)} [l] at 1945 73
\endlabellist
\centering
\includegraphics[width=.8\textwidth]{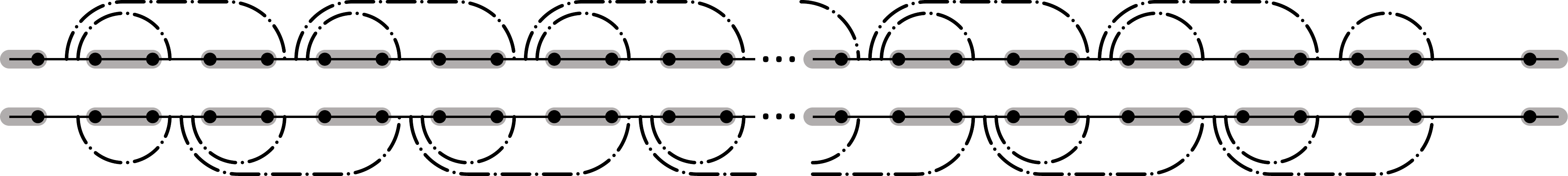}
\caption{Above, selected \(n\)-bundles in \(H_+\) for \(n\) even.
Below, selected \(n\)-bundles in \(H_-\) for \(n\) even.
Here and in Figure \ref{fig:241031_sigma-plus-odd}, the arcs of \(\ell\) highlighted in gray are \(\sigma_1^-, \sigma_2^-, \hdots, \sigma_n^-\).}
\label{fig:241031_sigma-plus-even}
\end{figure}

\begin{figure}
\labellist
\small\hair 2pt
\pinlabel {\(\ell\)} [l] at 1945 145
\pinlabel {\(\ell\)} [l] at 1945 73
\endlabellist
\centering
\includegraphics[width=.8\textwidth]{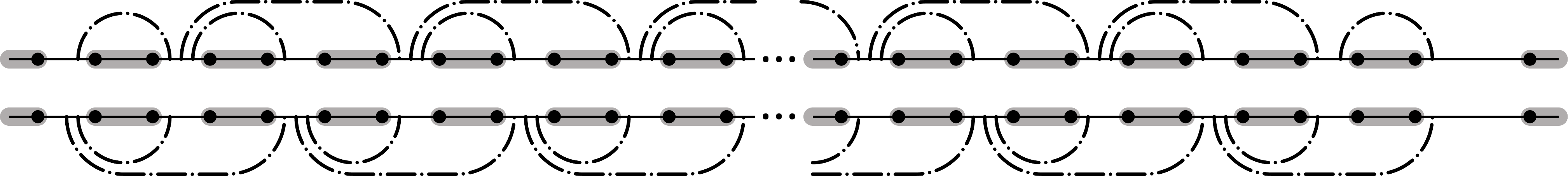}
\caption{Above, selected \(n\)-bundles in \(H_+\) for \(n\) odd.
Below, selected \(n\)-bundles in \(H_-\) for \(n\) odd.}
\label{fig:241031_sigma-plus-odd}
\end{figure}

\begin{prop}\label{prop:vertically-unperturbed}
The link \(L_{m,n}\) is unperturbed with respect to the \(n\)-bridge sphere \(H\).
\end{prop}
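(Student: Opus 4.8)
The plan is to deduce the proposition from Theorem~\ref{thm:two-con-str-irr}. Since $n>m>3$ gives $n\geq 3$, and since a strongly irreducible bridge sphere is unperturbed, it suffices to show that the bridge diagram $((\sigma^+,\sigma^-),\ell)$ defined above satisfies the $2$-connected condition, that is, that $\mathcal{G}_{i,j,\varepsilon}$ is $2$-connected for every pair of distinct indices $i,j\in\{1,\dots,n\}$ and every $\varepsilon\in\{+,-\}$. The reduction I would use is the following observation about $n$-bundles. By definition an $n$-bundle in $H_\varepsilon$ contains, for each $a\in\{1,\dots,n\}$, a component of $\sigma^+\cap H_\varepsilon$ that is a subarc of $\sigma_a^+$ adjacent to a component that is a subarc of $\sigma_{a+1}^+$ (indices mod $n$); moreover all components of the $n$-bundle run between the same two arcs $\ell_{k_1},\ell_{k_2}$ of $\ell$, so they are mutually parallel in $H_\varepsilon$ and hence either all separate $\sigma_i^-$ from $\sigma_j^-$ or none of them do. Consequently, if we can find for each triple $(i,j,\varepsilon)$ a single $n$-bundle in $H_\varepsilon=H_\varepsilon(0)$ whose components all separate $\sigma_i^-$ from $\sigma_j^-$, then those components all lie in $\mathcal{A}_{i,j,\varepsilon}$, so $\mathcal{G}_{i,j,\varepsilon}$ contains the $n$-cycle on vertices $1,2,\dots,n$ as a spanning subgraph and is therefore $2$-connected. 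Whether an $n$-bundle separates $\sigma_i^-$ from $\sigma_j^-$ depends only on the pair $\{\ell_{k_1},\ell_{k_2}\}$ it spans, relative to the known positions of $\sigma_1^-,\dots,\sigma_n^-$ along $\ell$.

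I would then build the required family by transporting the $n$-bundles of Lemma~\ref{lem:inductive} down the isotopy from $H(2)$ to $H=H(0)$. For $n$ even one carries the first and second $n$-bundles of the lemma, and for $n$ odd the first and third (Figure~\ref{fig:241031_2-to-1-path}); pushing the sphere from level $2$ to level $1$ turns these into the collection of $n$-bundles of Figure~\ref{fig:241031_level-1}, and pushing from level $1$ down to level $0$ (Figures~\ref{fig:241031_level-1-and-path} and \ref{fig:241031_level-0}) yields $n$-bundles that are genuine sub-collections of $\sigma^+\cap H_+$ and $\sigma^+\cap H_-$, recorded in Figures~\ref{fig:241031_sigma-plus-even} and \ref{fig:241031_sigma-plus-odd}. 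Since the top portion of $L_{m,n}$ and the part of the isotopy near level $0$ agree with the corresponding pieces of the knot $K_n$ of \cite{jang2016knot}, the pairs $\{\ell_{k_1},\ell_{k_2}\}$ spanned by these $n$-bundles are exactly the ones appearing there.

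Finally I would carry out the pair-by-pair check: for every $i\neq j$ and every $\varepsilon$, one of the $n$-bundles produced in $H_\varepsilon$ spans a pair $\{\ell_{k_1},\ell_{k_2}\}$ that separates $\sigma_i^-$ from $\sigma_j^-$, so by the first paragraph $\mathcal{G}_{i,j,\varepsilon}$ contains a spanning $n$-cycle and is $2$-connected. This establishes the $2$-connected condition, whence Theorem~\ref{thm:two-con-str-irr} gives that $H$ is strongly irreducible and therefore unperturbed. The one genuinely delicate part of the argument is the bookkeeping in the last two paragraphs: tracking which pair $\{\ell_{k_1},\ell_{k_2}\}$ each $n$-bundle spans through the isotopy, and confirming that the resulting family separates every pair $(\sigma_i^-,\sigma_j^-)$ on both hemispheres. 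However, this is structurally identical to the verification already carried out for $K_n$ in \cite{jang2016knot}, so it should introduce no new difficulty beyond faithfully reproducing that diagram-chase.
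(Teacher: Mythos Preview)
Your proposal is correct and follows essentially the same route as the paper: you transport the $n$-bundles of Lemma~\ref{lem:inductive} from level $2$ down to level $0$ via the same figure-by-figure isotopy, use the even/odd split on which bundles to carry, and then observe that any $n$-bundle separating $\sigma_i^-$ from $\sigma_j^-$ forces an $n$-cycle in $\mathcal{G}_{i,j,\varepsilon}$, giving $2$-connectedness and hence strong irreducibility by Theorem~\ref{thm:two-con-str-irr}. The paper likewise defers the final separation check to the pictures in Figures~\ref{fig:241031_sigma-plus-even}--\ref{fig:241031_sigma-plus-odd} and the analogous verification in \cite{jang2016knot}, so your acknowledged ``bookkeeping'' step is exactly what the paper does as well.
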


\begin{proof}
We want to show that \(\left(\left(\sigma^+,\sigma^-\right),\ell\right)\) satisfies the 2-connected condition, so we need to take the \(n\)-bundles we have found at level 2 and observe the isotopy down to level 0.
Moving from level 2 to level 1, \(p_2(2)\) will weave rightward between the other punctures and end up as \(p_{2n-1}(1)\).
This path is shown in figure \ref{fig:241031_2-to-1-path}.
This will pull the \(n\)-bundle in \(H_+(2)\) along the path, resulting in the collection of \(n\)-bundles illustrated in Figure \ref{fig:241031_level-1}.
To simplify, we will ignore roughly half of these and focus only on the \(n\)-bundles pictured in Figure \ref{fig:241031_level-1-and-path}, which also illustrates the path that the penultimate puncture will take when \(H(1)\) is isotoped to \(H\).
The result of this isotopy is a large amount of \(n\)-bundles in \(H\), all pictured in Figure \ref{fig:241031_level-0}.
Of those, we pick out the specific collection of \(n\)-bundles illustrated in Figure \ref{fig:241031_sigma-plus-even} for the case when \(n\) is even and in Figure \ref{fig:241031_sigma-plus-odd} for when \(n\) is odd.

\begin{figure}
\labellist
\small\hair 2pt
\pinlabel {1} at -50 145
\pinlabel {2} at 321.5 320
\pinlabel {3} at 606 340
\pinlabel {4} at 890.5 320
\pinlabel {5} at 1262 145
\pinlabel {\(n\)} at 321.5 -30
\pinlabel {\(n-1\)} at 606 -50
\pinlabel {\(n-2\)} at 890.5 -30
\endlabellist
\centering
\includegraphics[width=.25\textwidth]{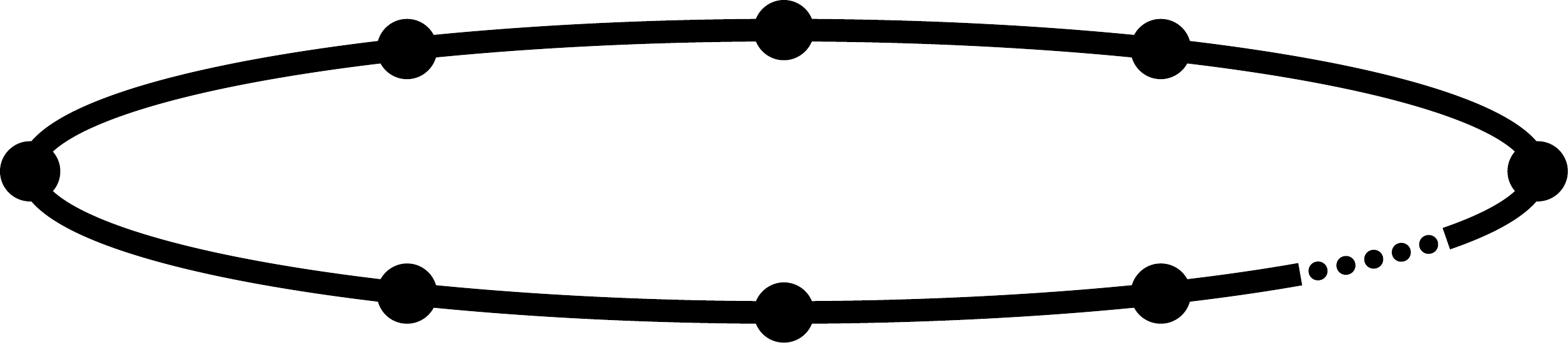}
\caption{For all \(\varepsilon\in\{+,-\}\) and all \(i,j\in\{1,2,\hdots,n\}\) with \(i\neq j\), the graph \(\mathcal{G}_{i,j,\varepsilon}\) contains this subgraph.}
\label{fig:241113_graph-cycle}
\end{figure}

Fix \(\varepsilon\in\{+,-\}\), and fix \(i,j\in\{1,2,\hdots,2n\}\), with \(i\neq j\).
Figures \ref{fig:241031_sigma-plus-even} and \ref{fig:241031_sigma-plus-odd} show that \(\sigma_i^-\) and \(\sigma_j^-\) are separated in \(H_{\varepsilon}\) by at least one \(n\)-bundle, regardless of the parity of \(n\).
It follows from the definition of an \(n\)-bundle that for any \(v\in\{1,2,\hdots,n\}\), vertex \(v\) in the graph \(\mathcal{G}_{i,j,\varepsilon}\) is connected to vertex \(v+1\) (using addition mod \(n\)), so therefore \(\mathcal{G}_{i,j,\varepsilon}\) contains a circuit passing in order through the vertices 1, 2, 3, \(\hdots\), \(n\), and back to 1, as shown in Figure \ref{fig:241113_graph-cycle}.
Such a graph is necessarily 2-connected, regardless of what additional edges the graph may contain.
Since this is true for all \(i\), \(j\), and \(\varepsilon\), it follows that \(\left(\left(\sigma^+,\sigma^-\right),\ell\right)\) satisfies the 2-connected condition.

Therefore by Theorem \ref{thm:two-con-str-irr}, \(H\) is a strongly irreducible bridge sphere for \(L_{m,n}\).
It is known that strongly irreducible bridge spheres are unperturbed, so this completes the proof.
\end{proof}

\section{\(L_{m,n}\) is Horizontally Unperturbed}\label{sec:WRT-V}

\begin{figure}
\labellist
\small\hair 2pt
\pinlabel {Surgery} [b] at 380 235
\pinlabel {Isotopy} [b] at 907.5 235
\pinlabel {Isotopy} [b] at 1435 235
\pinlabel {\(x\)} [b] at 295 -10
\pinlabel {\(x\)} [b] at 820 -10
\pinlabel {\(x\)} [b] at 1345 -10
\pinlabel {\(x\)} [b] at 1870 -10
\pinlabel {\(x_0\)} [b] at 153 -40
\pinlabel {\(x_0\)} [b] at 678 -40
\pinlabel {\(x_0\)} [b] at 1203 -40
\pinlabel {\(x_0\)} [b] at 1728 -40
\endlabellist
\centering
\includegraphics[width=.85\textwidth]{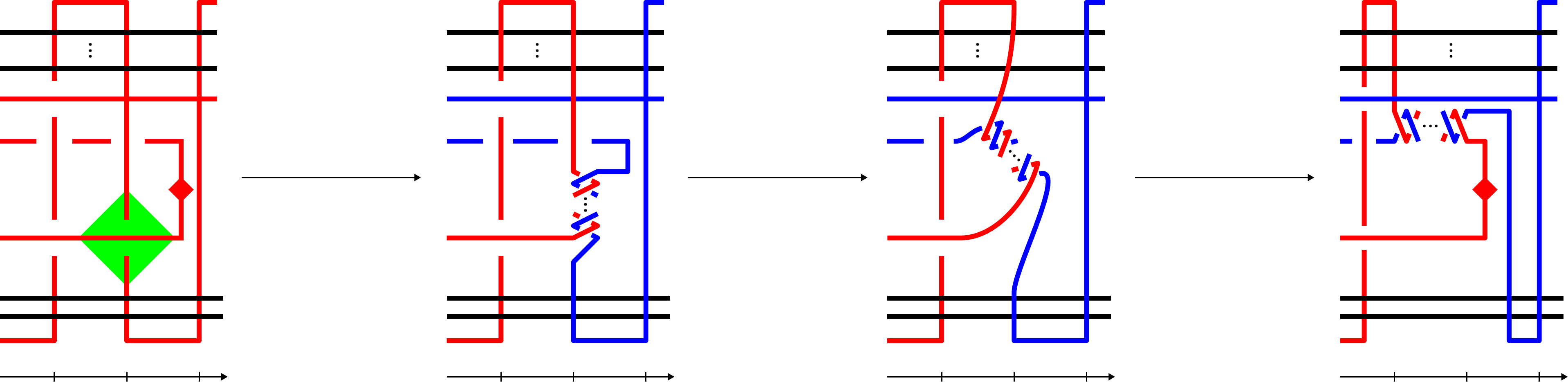}
\caption{Before and after the surgery and then the isotopy, each such region of the link contains exactly one \(V\)-maximum.
In these pictures we do not illustrate crossing information at most of the crossings because it varies depending on which distinguished crossing we are focusing on.
The number of black horizontal strands along the top of each picture may be any nonnegative even number, depending on the values of \(n\) and \(m\) and on which distinguished crossing we are focusing on.}
\label{fig:241010_rotating-twists}
\end{figure}

\begin{figure}
\labellist
\small\hair 2pt
\pinlabel {\(V\)} at 1300 1420
\endlabellist
\centering
\includegraphics[width=.25\textwidth]{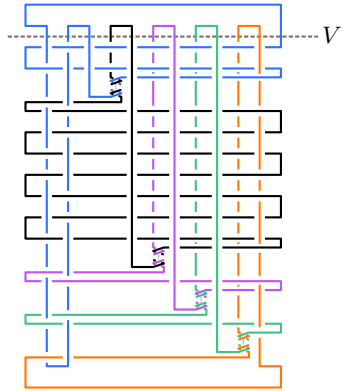}
\caption{This is the link \(L_{6,9}\) from Figure \ref{fig:241028_9-6-example} after the isotopy illustrated in Figure \ref{fig:241010_rotating-twists} is performed at each twist region, showing how \(V\) is a \(6\)-bridge sphere for \(L_{6,9}\).}
\label{fig:241016_9-6-example-rotated.pdf}
\end{figure}

\begin{prop}\label{prop:horizontally-unperturbed}
The link \(L_{m,n}\) is unperturbed with respect to the \(m\)-bridge sphere \(V\).
\end{prop}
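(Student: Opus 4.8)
The plan is to prove Proposition~\ref{prop:horizontally-unperturbed} by the same strategy used for \(H\) in Section~\ref{sec:WRT-H}: rather than arguing about de-perturbations directly, I would show that \(V\) is \emph{strongly irreducible} by exhibiting a bridge diagram for \((V,L_{m,n})\) together with a loop on \(V\) that satisfies the \(2\)-connected condition of Section~\ref{subsec:two-conned}, and then invoke Theorem~\ref{thm:two-con-str-irr} (applicable since our bridge sphere has \(m\geq 4>3\) bridges) together with the fact that a strongly irreducible bridge sphere is unperturbed.

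First I would perform the isotopy of Figure~\ref{fig:241010_rotating-twists} at each of the \(m-2\) twist regions so that \(V\) is honestly an \(m\)-bridge sphere with each twist region contributing exactly one \(V\)-maximum, as in Figure~\ref{fig:241016_9-6-example-rotated.pdf}. Then I would set up the horizontal analogues of the objects from Section~\ref{sec:WRT-H}: the two balls into which \(V\) splits \(S^3\); a family of spheres isotoping \(V\) in the \(x\)-direction between its \(V\)-extrema; a loop \(\lambda\) on \(V\) along which all punctures of \(L_{m,n}\cap V\) become collinear, together with the hemispheres \(V_+,V_-\) it bounds; complete collections of bridge disks \(\{F_i^+\}_{i=1}^m\) and \(\{F_i^-\}_{i=1}^m\) above and below \(V\); their traces \(\rho_i^\pm = F_i^\pm\cap V\); and the notion of an \emph{\(m\)-bundle} in \(V_\varepsilon\), a collection of arcs of \(\rho^+\) realizing adjacency of consecutive indices modulo \(m\), all with endpoints on two fixed arcs of \(\lambda\) and with at most one puncture among those endpoints. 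Exactly as in the proof of Proposition~\ref{prop:vertically-unperturbed}, it then suffices to produce, for every \(\varepsilon\in\{+,-\}\) and every pair \(i\neq j\), an \(m\)-bundle in \(V_\varepsilon\) separating \(\rho_i^-\) from \(\rho_j^-\): the definition of an \(m\)-bundle forces each vertex \(v\) to be joined to vertex \(v+1\) (mod \(m\)) in the associated graph \(\mathcal{G}_{i,j,\varepsilon}\), so that graph contains the cycle through \(1,2,\ldots,m\) of Figure~\ref{fig:241113_graph-cycle} and is therefore \(2\)-connected no matter what additional edges it carries.

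To produce these \(m\)-bundles I would slide \(V\) in the \(x\)-direction out past all of the \(V\)-maxima, tracking the arcs of \(\rho^+\). Since \(L_{m,n}\) can be arbitrarily long in the \(x\)-direction when \(n\) is large, I expect to need a reverse-induction lemma in the spirit of Lemma~\ref{lem:inductive} that bypasses the repetitive horizontal portion of the diagram and lands on a controlled family of \(m\)-bundles near the base level; away from the twist regions and the extra horizontal loops the bridge diagram evolves exactly as in the corresponding computation in \cite{jang2016knot}, so those stretches can be quoted rather than redrawn. Assembling the resulting \(m\)-bundles then yields, for every \(i,j,\varepsilon\), the required separating bundle, and the argument concludes as in Proposition~\ref{prop:vertically-unperturbed}.

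The main obstacle is the bookkeeping in this last step: one must rerun the diagram-chase of Section~\ref{sec:WRT-H} ``sideways,'' now with the \(m-2\) twist regions sitting end-on to \(V\) and with the extra horizontal loops of various lengths --- which played no role in the analysis of \(H\) --- being swept through by \(V\) as it slides. The work lies in verifying that each such loop and each twist region contributes only benign windings, so that consecutive indices remain adjacent inside every relevant \(m\)-bundle and every pair \(\rho_i^-,\rho_j^-\) ends up separated by one of them; granting that, the appeal to Theorem~\ref{thm:two-con-str-irr} is immediate.
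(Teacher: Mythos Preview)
Your proposal takes a genuinely different route from the paper, and it is worth contrasting the two. The paper does \emph{not} attempt to show that \(V\) is strongly irreducible. Instead it exploits the multi-component structure of \(L_{m,n}\): after the isotopy of Figure~\ref{fig:241010_rotating-twists}, each of the \(m-2\) ``loop'' components is an unknot meeting \(V\) in a single max and a single min, hence is automatically in minimal (unperturbed) bridge position with respect to \(V\); the remaining component \(L'_{m,n}\) is identified explicitly as \(4_1\) (for \(n\) even) or \(5_2\) (for \(n\) odd), each a genuine \(2\)-bridge knot, so its \(2\)-bridge position relative to \(V\) is again unperturbed. Since a cancelling pair of bridge disks for the whole link would restrict to a cancelling pair for whichever single component they touch, unperturbedness of \(V\) for each component forces unperturbedness for \(L_{m,n}\). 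This argument is short, needs no diagram chase, and never passes through strong irreducibility.

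Your plan, by contrast, aims to establish the strictly stronger conclusion that \(V\) is strongly irreducible, via the \(2\)-connected condition. Two concerns. First, what you have written is a programme rather than a proof: the ``main obstacle'' you flag --- tracking \(\rho^+\) through all of the horizontal loops and twist regions and verifying that every pair \(\rho_i^-,\rho_j^-\) is separated by an \(m\)-bundle in each hemisphere --- \emph{is} the entire content, and none of it is carried out. You also cannot simply quote \cite{jang2016knot} for the ``repetitive horizontal portion'': that paper analyses the \(H\)-bridge diagram, not the \(V\)-bridge diagram, so there is no pre-existing sideways computation to borrow. Second, and more seriously, it is not clear that \(V\) is strongly irreducible at all. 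The tangle on the \(x<2.5\) side of \(V\) is very simple compared with the \(x>2.5\) side (almost all of the twisting and all of the long horizontal loops live to the right), and the paper itself reminds us that unperturbed bridge spheres need not be strongly irreducible \cite{pongtanapaisan2020keen}. So your approach risks aiming at a statement that may be false; even if true, it would require substantially more work than the paper's component-by-component argument.
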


\begin{proof}
We point out that after replacing crossings in \(K_{m,n}\) with twist regions, \(L_{m,n}\) is not in fact in an \(m\)-bridge position with respect to \(V\), for each twist region introduces multiple new \(V\)-extrema, all on the same side of \(V\).
An isotopy at each twist region easily solves this problem.

Let \(x_0\) be the \(x\)-value corresponding to one of the distinguished crossings of \(K_{m,n}\).
Consider the pictures in Figure \ref{fig:241010_rotating-twists}.
The first picture shows the part of \(K_{m,n}\) lying in the region \(X=(x_0-1.5,x_0+1.5)\times(-1,2m)\times(-1,1)\).
This part of the knot contains a single distinguished crossing.
The second picture in Figure \ref{fig:241010_rotating-twists} shows the result of the surgery replacing the crossing with a twist region.
The third and fourth pictures show an isotopy of the strands of \(L_{m,n}\) in \(X\).
This isotopy is local, happening only within \(X\).
The point is that in the left picture we see that \(X\) contains exactly one \(V\)-maximum, and in the right picture, i.e., after the surgery and isotopy, \(X\) still contains exactly one \(V\)-maximum.
We perform this surgery and isotopy in a similar region around each distinguished crossing, so in the end the number of extrema relative to \(h_x\) and their locations relative to \(V\) has not changed from \(K_{m,n}\) to \(L_{m,n}\).
Therefore \(V\) is an \(m\)-bridge sphere for \(L_{m,n}\).

Next, observe that \(L_{m,n}\) is an \(m-1\) component link. 
Of those components, \(m-2\) of them have exactly one \(V\)-maximum and one \(V\)-minimum, which means that individually they are unknots.
The sphere \(V\) is necessarily an unperturbed bridge sphere with respect to each of these components.
We call the other link component of \(L_{m,n}'\).
(This is the component of \(L_{m,n}\) which contains the two leftmost \(H\)-maxima.)
Figures \ref{fig:241010_5-2-knot-odd-case} and \ref{fig:241015_4-1-knot-even-case} explicitly show that \(L_{m,n}'=4_1\) when \(n\) is even, and \(L_{m,n}'=5_2\) when \(n\) is odd.
Since \(4_1\) and \(5_2\) are 2-bridge knots, it follows that \(V\) is an unperturbed bridge sphere for \(L_{m,n}'\).
Since \(V\) is an unperturbed bridge sphere for each individual component of \(L_{m,n}\), \(V\) is therefore an unperturbed bridge sphere for \(L_{m,n}\) itself.

\begin{figure}
\labellist
\small\hair 2pt
\pinlabel {Isotopy} [b] at 202 235
\pinlabel {Isotopy} [b] at 464 235
\endlabellist
\centering
\includegraphics[width=.5\textwidth]{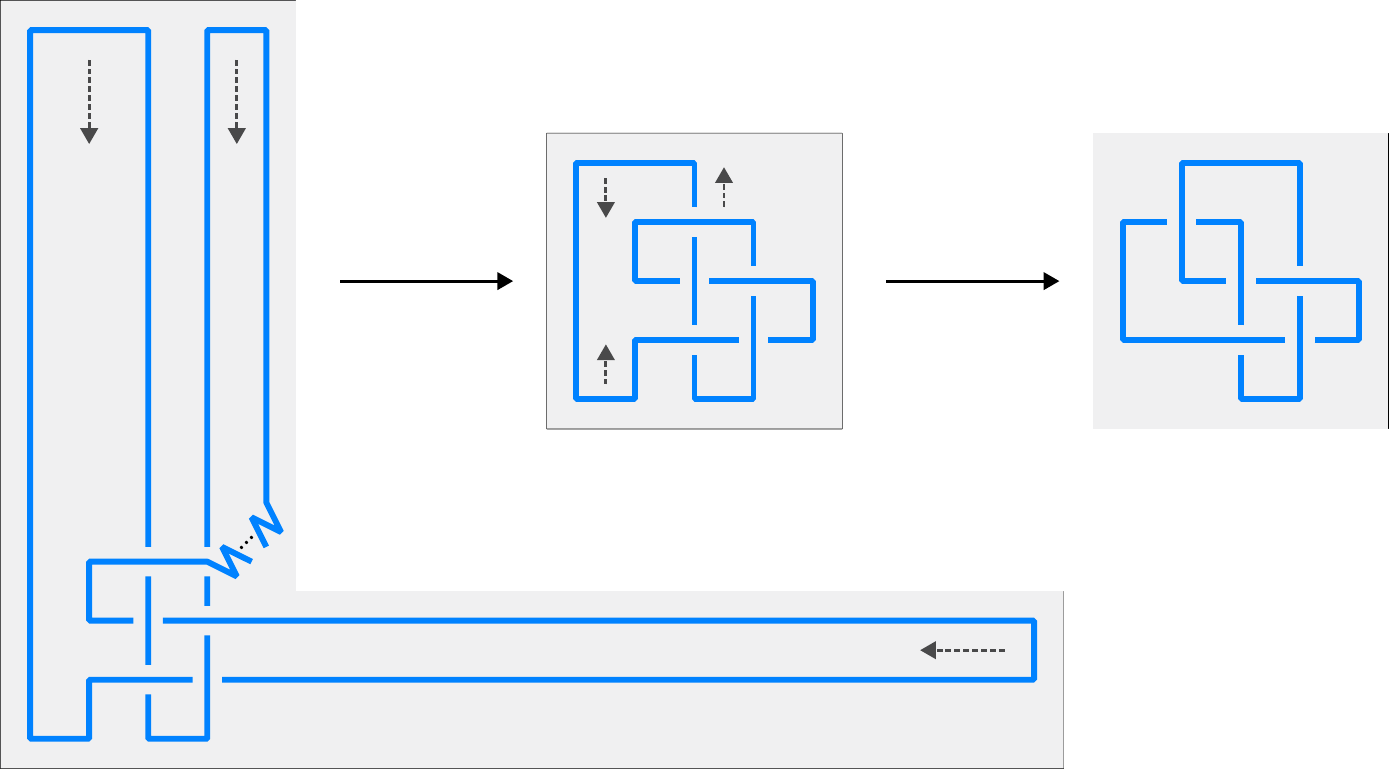}
\caption{When \(n\) is odd, \(L'_{m,n}\) is the \(5_2\) knot.}
\label{fig:241010_5-2-knot-odd-case}
\end{figure}

\begin{figure}
\labellist
\small\hair 2pt
\pinlabel {Isotopy} [b] at 202 235
\pinlabel {Isotopy} [b] at 464 235
\endlabellist
\centering
\includegraphics[width=.5\textwidth]{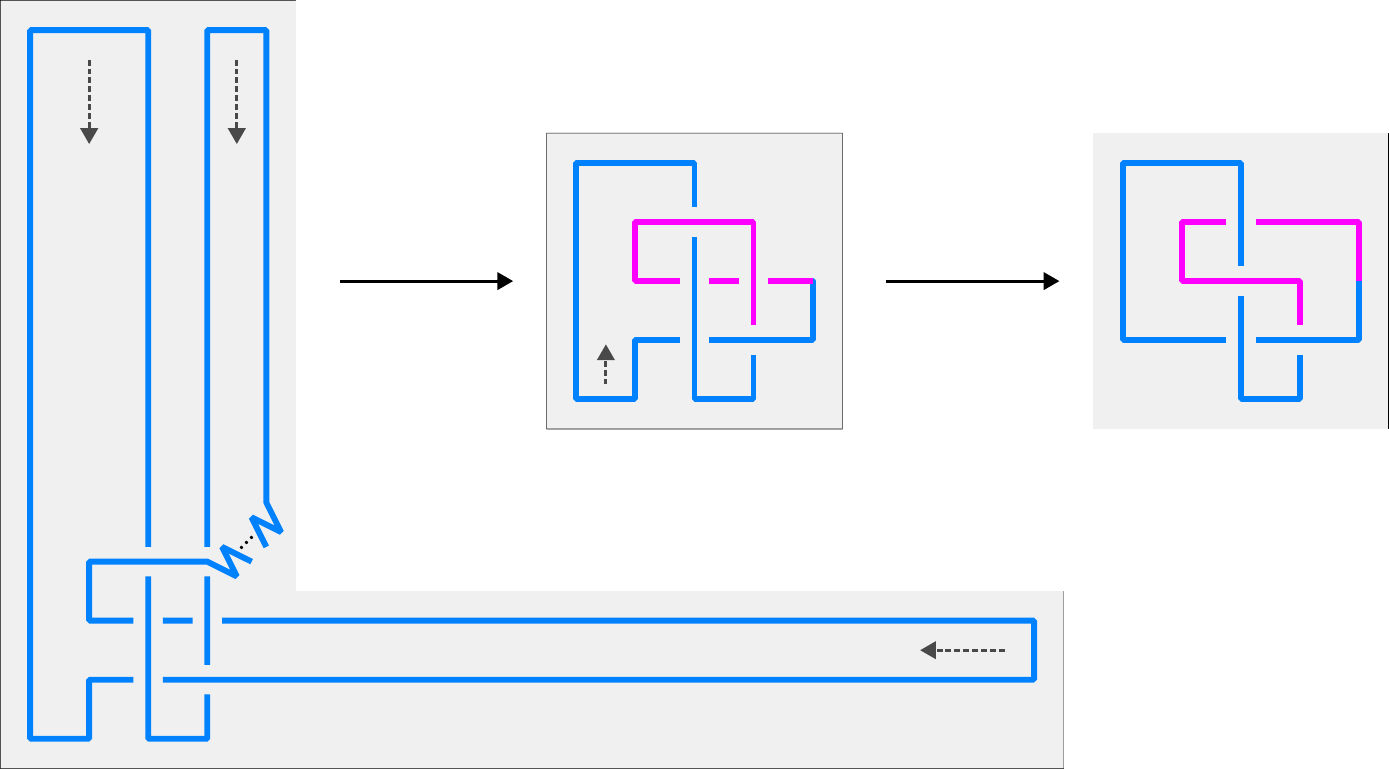}
\caption{When \(n\) is even, \(L'_{m,n}\) is the \(4_1\) knot.}
\label{fig:241015_4-1-knot-even-case}
\end{figure}

Now since \(V\) is unperturbed with respect to each component of \(L_{m,n}\) it follows that \(V\) is unperturbed with respect to \(L_{m,n}\) itself.
\end{proof}

Finally we can put all the pieces together to prove our main result.

\begin{proof}[Proof of Theorem \ref{thm:main}]
Fix integers \(m\) and \(n\) such that \(3<m<n\), and let \(L_{m,n}\), \(H\), and \(V\) be constructed as in section \ref{sec:links}.
By Propositions \ref{prop:vertically-unperturbed} and \ref{prop:horizontally-unperturbed}, both \(H\) and \(V\) are unperturbed bridge spheres for \(L_{m,n}\), and since \(n\neq m\), \(H\) and \(V\) have different numbers of punctures from each other, so they cannot be isotopic.
\end{proof}

\section{Final Remarks}
First, the reader may have noticed that our proofs that \(H\) and \(V\) are unperturbed do not rely on the assumption that \(n\neq m\).
The reason Theorem \ref{thm:main} assumes \(n\neq m\) is that it could be the case that \(H\) and \(V\) are isotopic if \(n=m\), in which case our examples would not be new; others have already found bridge positions with arbitrarily high bridge number.

Second, as described in section \ref{subsec:moves}, the only moves typically allowed in relating two bridge positions are perturbation and de-perturbation moves, which change a bridge position's bridge number by 1 and \(-1\), respectively.
As indicated by Theorem \ref{thm:main} and by the other results we mentioned in section \ref{subsec:moves}, increasing the bridge number is in some cases unavoidable when using these moves to go from one bridge position to another.
Interestingly, in the context of bridge positions or braid presentations, if we allow more types of elementary moves (in addition to the typical set), it may be possible to relate bridge positions or braid presentations without ever increasing the bridge number or braid index \cite{birman2006stabilization,menasco2024studying}. 
For example, computing the minimum number of steps required to relate bridge positions or braid presentations using Menasco et al.'s set of moves is an interesting problem worth investigating in the future.

Finally, a note about how we solved the problem of Proposition \ref{prop:horizontally-unperturbed}:
Arguments in the proof of Theorem 1.3 of \cite{blair2020wirtinger} told us that there must \textit{exist} an \(m\)-bridge sphere for for \(L_{m,n}\) since the link's diagram was colorable in a certain way, but initially we did not know that \(V\) itself was that sphere.
However, knowing that an \(m\)-bridge sphere \textit{exists} gave us the idea and intuition to see that simply rotating all the twist regions by a quarter turn as in Figure \ref{fig:241010_rotating-twists} would put \(L_{m,n}\) in the right position.
All that to say, even though the work in \cite{blair2020wirtinger} is not technically necessary to include in the written proof, it was necessary for our problem-solving process, and we would not have arrived at this proof without it.

\bibliography{ref}
\bibliographystyle{abbrv}

\end{document}